\documentclass[11pt]{article}
\usepackage[a4paper,margin=1in]{geometry}
\usepackage[T1]{fontenc}
\usepackage[utf8]{inputenc}
\usepackage{lmodern}
\usepackage{amsmath,amssymb,amsthm,mathtools}
\usepackage{bm}
\usepackage{booktabs}
\usepackage{array}
\usepackage{tikz}
\usetikzlibrary{arrows.meta,positioning,calc}
\usepackage{pgfplots}
\pgfplotsset{compat=1.18}
\usepackage{enumitem}
\usepackage{microtype}
\usepackage[numbers,sort&compress]{natbib}
\usepackage[colorlinks=true,allcolors=blue]{hyperref}
\usepackage[nameinlink,noabbrev]{cleveref}
\newtheorem{theorem}{Theorem}[section]
\newtheorem{proposition}[theorem]{Proposition}
\newtheorem{corollary}[theorem]{Corollary}
\newtheorem{lemma}[theorem]{Lemma}
\newtheorem{remark}[theorem]{Remark}
\newtheorem{example}[theorem]{Example}
\theoremstyle{definition}
\newtheorem{definition}[theorem]{Definition}

\newcommand{\R}{\mathbb R}

\newcommand{\mug}{\mu_g}
\newcommand{\mugc}{\mu_g^{\mathrm c}}
\newcommand{\Dg}{D_g}
\newcommand{\ACg}{AC_g}
\newcommand{\Ran}{\operatorname{Ran}}
\newcommand{\Ker}{\operatorname{Ker}}
\newcommand{\Span}{\operatorname{span}}
\newcommand{\dist}{\operatorname{dist}}
\newcommand{\norm}[1]{\left\lVert #1\right\rVert}
\newcommand{\abs}[1]{\left|#1\right|}

\newcommand{\cR}{\mathcal R}
\newcommand{\cK}{\mathcal K}

\newcommand{\cB}{\mathcal B}
\newcommand{\cU}{\mathcal U}
\newcommand{\Lam}{\Lambda}
\newcommand{\eps}{\varepsilon}
\newcommand{\Rpre}{\mathcal R_-}
\newcommand{\Rpost}{\mathcal R_+}
\newcommand{\Rdel}{\mathcal R_{\rm del}}

\title{Vanishing Events and Regularity-Conditioned Reachability\\in Linear Stieltjes Systems}
\author{Francisco J. Fern\'andez\\[0.6ex]
\small\textit{Departamento de Estat\'istica, An\'alise Matem\'atica e Optimizaci\'on,}\\
\small\textit{Facultade de Matem\'aticas, Campus Vida, Universidade de Santiago de Compostela,}\\
\small\textit{15782 Santiago de Compostela, Spain}\\
\small\textit{CITMAga, 15782 Santiago de Compostela, Spain}\\[0.5ex]
\small Email: \href{mailto:fjavier.fernandez@usc.es}{fjavier.fernandez@usc.es}\\
\small ORCID: \href{https://orcid.org/0000-0002-0667-2639}{0000-0002-0667-2639}}
\date{}

\begin{document}
\maketitle

\begin{abstract}
We study linear Stieltjes control systems in which actuation is concentrated at finitely many temporal events, and we ask what happens when one of those events progressively loses its atomic weight in the associated Stieltjes measure. For constant coefficients, ordinary finite-atomic reachability is governed by a truncated Krylov space and is therefore insensitive to the positive values of the event weights. A different picture emerges once trajectory regularity is imposed. In the supercritical super-$g$-H\"older regime, we minimize the largest event-normalized state jump and characterize exactly the terminal directions for which this cost remains uniformly bounded as a prescribed event weight tends to zero. The characterization combines what can be reached before the vulnerable event, a drift-cancellability condition at that event, and the directions that remain available afterwards. Ordinary reachability of the deleted-event system can therefore be strictly larger than bounded-cost reachability in the singular limit. We also separate two mechanisms of divergence: loss of ordinary reachability after deletion and failure of the intermediate compatibility condition. For compatible targets, the minimum cost converges to a constrained deleted-event problem. A dimensionless position--velocity benchmark motivated by pulsed proximity maneuvers shows the distinction explicitly.
\end{abstract}

\noindent\textbf{Keywords:} Stieltjes differential systems; reachability; vanishing events; super-$g$-H\"older regularity; atomic control; Krylov spaces.

\medskip
\noindent\textbf{2020 Mathematics Subject Classification:} 93B05; 34A12; 26A24; 93C05; 93C55.

\section{Introduction}\label{sec:intro}

Consider a controlled system in which useful actuation is available only at a finite sequence of temporal opportunities.  Suppose that one of these opportunities becomes progressively weaker.  The first question is classical: can the same terminal state still be reached?  But this is not the only relevant question.  Even if endpoint reachability survives, the trajectory may be forced to use the shrinking event in an increasingly singular way.  The purpose of this paper is to separate these two effects and to identify exactly which terminal directions remain reachable with a uniformly bounded regularity cost.

Pulsed and impulsive control provide a natural motivation.  In spacecraft proximity operations, for example, relative motion is described in position--velocity variables, maneuver plans use discrete corrections, and minimum-impulse or execution constraints limit how accurately an individual correction can be realized \citep{ClohessyWiltshire1960,Dannemiller2011,PeiEtAl2017,BotelhoRosaLemos2025}.  Robust rendezvous studies also account for execution errors and partial or missed thrust \citep{LouembetArzelierDeaconu2015,AguilarMarsillachDiCairanoWeiss2020,NemirovskyGurfil2026}.  These works motivate the problem class.  The model below is deliberately more abstract: it is a Stieltjes event model, not a calibrated spacecraft propagator.

Stieltjes dynamics gives a convenient way to encode heterogeneous temporal behaviour.  A nondecreasing left-continuous derivator $g$ may evolve continuously, remain flat on inactive intervals, and jump at isolated events.  Its associated measure $\mu_g$ records all three behaviours.  In a finite-atomic clock, the positive atomic weights $m_1,\ldots,m_N$ measure the operational weight of the events.  They are not, by themselves, burn durations, delivered impulses, energies, or fault fractions.

For constant matrices $A$ and $B$, the ordinary reachability of a purely atomic Stieltjes system with $N$ positive events is already known.  From the zero state it is the truncated Krylov space
\begin{equation}\label{eq:intro-krylov}
 \mathcal K_N(A,B)
 =\operatorname{span}\{\operatorname{Ran}B,\operatorname{Ran}AB,\ldots,
 \operatorname{Ran}A^{N-1}B\}
\end{equation}
\citep{FernandezReachability2026}.  The crucial point for the present paper is not the formula itself but what it does \emph{not} see: as long as every event weight remains positive, the reachable subspace depends on the number of events, not on their positive values.  An event can therefore become arbitrarily small while the ordinary Kalman/Krylov geometry remains unchanged.

Regularity relative to the Stieltjes clock detects what ordinary reachability misses.  The super-$g$-H\"older theory of \citet{IlterDuruKoca2026} shows that, for $\alpha>1$, all state variation is atomic and the jump carried by an event of weight $m_j$ is measured through
\[
 \frac{\|\Delta x_j\|}{m_j^\alpha}.
\]
This is a trajectory-regularity quantity rather than a fuel or energy cost.  A fixed jump becomes more expensive when it is assigned to a smaller event.  The singular question is therefore precise: if one event weight $m_\ell=\varepsilon$ tends to zero while all other event weights stay fixed and positive, for which terminal directions can these normalized jumps remain uniformly bounded?

The mechanism is already visible at the vanishing event.  Write
\[
 w_\ell=\frac{x_\ell-x_{\ell-1}}{\varepsilon^\alpha}.
\]
The event equation becomes
\begin{equation}\label{eq:intro-hidden}
 \varepsilon^{\alpha-1}w_\ell=Ax_{\ell-1}+Bu_\ell.
\end{equation}
Let $Q$ be the orthogonal projection onto $(\operatorname{Ran}B)^\perp$.  Projecting \eqref{eq:intro-hidden} removes the control term:
\[
 QA x_{\ell-1}=\varepsilon^{\alpha-1}Qw_\ell.
\]
If the normalized jump stays bounded and $\alpha>1$, the right-hand side tends to zero.  Any limiting pre-event state must therefore satisfy
\begin{equation}\label{eq:intro-compat}
 Ax_{\ell-1}\in\operatorname{Ran}B.
\end{equation}
Thus the vanishing event leaves behind an intermediate compatibility condition: the drift present just before the event must lie in a direction that the control can cancel.

The main theorem propagates this local condition to the terminal state.  Let $\mathcal B_\ell$ be the set of terminal directions whose minimum super-$g$-H\"older cost stays uniformly bounded as $\varepsilon\downarrow0$.  Then
\begin{equation}\label{eq:intro-main}
 \mathcal B_\ell
 =\bigl(\mathcal K_{\ell-1}(A,B)\cap A^{-1}(\operatorname{Ran}B)\bigr)
  +\mathcal K_{N-\ell}(A,B).
\end{equation}
The formula has a direct reading.  Before the vulnerable event, the system may build any state in the corresponding Krylov space, but only those pre-event states whose drift is cancellable survive the singular limit.  After the event, the remaining controls add the directions available in the post-event Krylov space.  The resulting \emph{subspace} is independent of the remaining positive event weights, although the numerical value of the minimum cost need not be.

This bounded-cost geometry can be strictly smaller than ordinary reachability after deletion.  For the double integrator
\[
 A=\begin{pmatrix}0&1\\0&0\end{pmatrix},\qquad B=e_2,
\]
with three event weights $(a,\varepsilon,b)$ and the middle one vanishing, deleting the middle event leaves an ordinarily controllable two-event system, $\mathcal R_{\rm del}=\mathbb R^2$.  By contrast, \eqref{eq:intro-main} gives $\mathcal B_2=\operatorname{span}\{e_2\}$.  A normalized terminal position correction $e_1$ remains reachable, but its minimum regularity cost diverges as $\varepsilon\downarrow0$.  The raw state jump at the vulnerable event still tends to zero; what becomes singular is the jump relative to the stronger scale $\varepsilon^\alpha$.

The same distinction leads to two different blow-up mechanisms.  If a target leaves the reachable space when the event is deleted, then the shrinking event must supply a terminal component that the remaining system cannot generate; this produces a range-loss lower bound of order $\varepsilon^{-\alpha}$.  A subtler case occurs when the target remains ordinarily reachable after deletion, but every limiting realization violates \eqref{eq:intro-compat}.  This compatibility loss produces a lower-bound scale $\varepsilon^{1-\alpha}$.  For targets in $\mathcal B_\ell$, the minimum costs instead converge to a deleted-event problem with the intermediate compatibility condition imposed explicitly.

The ingredients around this result come from several established theories.  Ordinary Stieltjes controllability and the finite-atomic Krylov formula are available in \citet{SiWangFeckan2020,FernandezReachability2026}; the supercritical atomic representation is due to \citet{IlterDuruKoca2026}; and the finite-dimensional optimization is a classical minimum-amplitude problem \citep{Cadzow1974,Kolev1975}.  Predecessor constructions and inverse-image recursions are standard in geometric and constrained control \citep{BasileMarro1969,Wonham1979,BorrelliBemporadMorari2017}, while singularly parameterized optimization has a broader variational theory \citep{AvrachenkovEtAl2012,RockafellarWets1998}.  We do not claim novelty for these ingredients separately.  The contribution here is the exact fixed-target characterization selected by the vanishing-event supercritical scaling, together with the distinction between range loss and compatibility loss.

The rest of the paper follows the logic of this argument.  \Cref{sec:framework} fixes the Stieltjes conventions and recalls ordinary finite-atomic reachability.  \Cref{sec:regularity} explains the supercritical regularity filter, and \Cref{sec:atomic} rewrites the problem in normalized-jump variables.  \Cref{sec:vanishing} proves the exact bounded-cost subspace theorem.  \Cref{sec:costlimit} studies the limiting cost and the two blow-up mechanisms.  \Cref{sec:application} returns to the position--velocity benchmark, and \Cref{sec:discussion} summarizes the scope and consequences.
\section{Stieltjes framework and ordinary reachability}\label{sec:framework}

\subsection{The Stieltjes clock and why atoms matter}

The derivator $g$ is the clock of the system.  We use the standard first-order Stieltjes framework of \citet{FrigonPouso2017}.  Its increments, rather than Euclidean time increments, measure how much evolution is available between two physical times.  We now fix the conventions needed later, paying particular attention to atoms because the main singular limit concerns the disappearance of one such event.

Let $I=[a,b]$, $a<b$, and let $g:I\to\R$ be nondecreasing and left-continuous.  We normalize the associated finite Lebesgue--Stieltjes measure by
\begin{equation}\label{eq:measure}
 \mug([s,t))=g(t)-g(s),
 \qquad a\le s<t\le b.
\end{equation}
The set of right jumps is
\[
 \Dg:=\{\tau\in[a,b):\Delta^+g(\tau):=g(\tau+)-g(\tau)>0\}.
\]
At $\tau\in\Dg$ the atomic mass is
\[
 m_\tau:=\mug(\{\tau\})=\Delta^+g(\tau).
\]
We write
\begin{equation}\label{eq:measure-decomp}
 \mug=\mugc+\sum_{\tau\in\Dg}m_\tau\delta_\tau,
\end{equation}
where $\mugc$ is nonatomic; it may itself contain absolutely continuous and singular-continuous parts.

We work with the canonical $g$-absolutely continuous class.  A function $x:I\to\R^n$ belongs to $\ACg(I;\R^n)$ if there exists $v\in L^1(\mug;\R^n)$ such that
\begin{equation}\label{eq:AC-representation}
 x(t)=x(a)+\int_{[a,t)}v(s)\,d\mug(s).
\end{equation}
The density $v$, unique $\mug$-almost everywhere, is denoted by $x'_g$.  The fundamental theorem for the Lebesgue--Stieltjes integral is given in \cite[Theorem~5.1]{FrigonPouso2017}.  In particular, at every atom,
\begin{equation}\label{eq:jump}
 x(\tau+)-x(\tau)=m_\tau x'_g(\tau).
\end{equation}
The canonical representative is important because the pointwise Stieltjes derivative can have a nontrivial kernel in larger function classes; see \citet{FernandezKernel2025}.

For $\alpha>0$, define
\begin{equation}\label{eq:holder}
 [x]_{\alpha,g}
 :=\sup_{\substack{s,t\in I\\g(s)\ne g(t)}}
 \frac{\norm{x(t)-x(s)}}{\abs{g(t)-g(s)}^\alpha},
\end{equation}
with the additional convention that $x(t)=x(s)$ whenever $g(t)=g(s)$.  We refer to $\alpha=1$ as the $g$-Lipschitz case and to $\alpha>1$ as the super-$g$-H\"older regime, following \citet{IlterDuruKoca2026}.

\subsection{Controlled dynamics and the jump law}

The control system is written with respect to this generalized clock.  Away from atoms it behaves as a measure-driven differential equation; at an atom it becomes a finite state update.  The latter viewpoint will be decisive later.

Consider
\begin{equation}\label{eq:general-system}
 x'_g(t)=A(t)x(t)+B(t)u(t),
 \qquad x(a)=x_0,
\end{equation}
where
\[
 A\in L^\infty(\mug;\R^{n\times n}),
 \qquad
 B\in L^\infty(\mug;\R^{n\times m}),
 \qquad
 u\in\cU_g:=L^2(\mug;\R^m).
\]
We fix Borel representatives whenever pointwise atomic values are used.  This convention is harmless at atoms: if two representatives agree $\mug$-almost everywhere, then they agree on every singleton of positive $\mug$-measure.  Since $\mug(I)<\infty$, $L^2(\mug)\subset L^1(\mug)$, and the state equation is meaningful in the canonical class.

At an atom $\tau$ the jump law is
\begin{equation}\label{eq:controlled-jump}
 x(\tau+)-x(\tau)
 =m_\tau\bigl(A(\tau)x(\tau)+B(\tau)u(\tau)\bigr).
\end{equation}

\subsection{Ordinary finite-atomic reachability: what it sees and what it does not}

Before imposing any regularity constraint, the finite-atomic problem is an ordinary reachability problem for a discrete sequence of event updates.  The result needed later is already established in \citet{FernandezReachability2026}.  We recall it not as a new theorem, but because it provides the baseline against which the singular regularity effect will be measured.

The key point is that ordinary reachability records which directions can be generated after a given number of events; it does not record how small a positive event mass is.  This insensitivity is exactly what makes the limit $m_\ell\downarrow0$ nontrivial once a trajectory-regularity budget is imposed.

For constant $A\in\R^{n\times n}$ and $B\in\R^{n\times m}$ define
\begin{equation}\label{eq:Kdef}
 \cK_r(A,B)
 :=\Span\{\Ran B,\Ran AB,\ldots,\Ran A^{r-1}B\},
 \qquad r\ge1,
\end{equation}
and $\cK_0(A,B):=\{0\}$.  The controllability index is
\[
 \nu(A,B):=\min\{r\ge1:\cK_r(A,B)=\cK_n(A,B)\}.
\]

\begin{theorem}[Finite-atomic ordinary reachability, \citealp{FernandezReachability2026}]\label{thm:sourceA-atomic}
Assume that $A,B$ are constant and
\begin{equation}\label{eq:pure-atomic-clock}
 \mug=\sum_{j=1}^Nm_j\delta_{\tau_j},
 \qquad
 a\le\tau_1<\cdots<\tau_N<b,
 \qquad m_j>0.
\end{equation}
Then, from the zero initial state,
\begin{equation}\label{eq:ordinary-KN}
 \cR_g(a,b;0)=\cK_N(A,B).
\end{equation}
Consequently, exact controllability holds if and only if
\[
 \operatorname{rank}[B,AB,\ldots,A^{N-1}B]=n.
\]
If $(A,B)$ is Kalman controllable, the least number of positive atoms required for exact controllability is $\nu(A,B)$.  The reachable subspace is independent of the locations and positive values of the masses.
\end{theorem}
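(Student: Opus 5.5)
Under the purely atomic clock \eqref{eq:pure-atomic-clock}, the measure $\mug$ vanishes on every interval free of atoms. The plan is to reduce the problem to a finite linear recursion and then compute its range.

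\textbf{Step 1: reduction to a recursion.} By \eqref{eq:AC-representation}, a canonical trajectory is constant on $[a,\tau_1]$, on each $(\tau_j,\tau_{j+1}]$, and on $(\tau_N,b]$. Write $x_0=x(a)=0$ and $x_j:=x(\tau_j+)$. Since $x(\tau_j)=x_{j-1}$, the jump law \eqref{eq:controlled-jump} gives
\[
x_j=(I+m_jA)x_{j-1}+m_jBu_j,\qquad u_j:=u(\tau_j).
\]
Any vector $u_j\in\R^m$ can be realized by an element of $\cU_g$, because $u$ only matters on the atoms. The terminal state $x(b)=x_N$ is therefore
\[
x_N=\sum_{j=1}^N m_j\Bigl(\prod_{i=j+1}^{N}(I+m_iA)\Bigr)Bu_j .
\]
Here the product is ordered, but all its factors commute.

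\textbf{Step 2: the inclusion $\cR_g(a,b;0)\subseteq\cK_N(A,B)$.} Expand each product as a polynomial in $A$. The $j$-th term has degree at most $N-j\le N-1$, so every term lies in $\cK_N$.

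\textbf{Step 3: the reverse inclusion, by induction on $N$.} Let $R_k$ be the reachable set after $k$ events, and assume $R_{k}=\cK_{k}$. Then
\[
R_{k+1}=(I+m_{k+1}A)\cK_k+\Ran B .
\]
For $v=A^{r}Bz$ with $r\le k-1$, we have $(I+m_{k+1}A)A^{r}Bz\in R_{k+1}$. Since $A^{r}Bz\in\cK_k$, this element can be rewritten via the Krylov structure. The cleanest route is a downward argument. We know
\[
R_{k+1}\supseteq \Ran B+(I+m_{k+1}A)\Ran B\ni m_{k+1}ABz,
\]
so $\Ran AB\subseteq R_{k+1}$ because $m_{k+1}>0$. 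Inductively, if $\cK_{s}\subseteq R_{k+1}$ for some $s\le k$, then $(I+m_{k+1}A)A^{s-1}Bz\in R_{k+1}$. Subtracting $A^{s-1}Bz\in\cK_s\subseteq R_{k+1}$ and dividing by $m_{k+1}>0$ gives $A^sBz\in R_{k+1}$, hence $\cK_{s+1}\subseteq R_{k+1}$.

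This step is where positivity of the masses is used, and it is the only delicate point. I expect it to be the main obstacle, since one must check that the subtraction stays within already-established spans. That is exactly what the induction on $s$ ensures.

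\textbf{Step 4: consequences.} Exact controllability holds iff $\cK_N=\R^n$, which is the rank condition. Because $\cK_r$ increases in $r$ and stabilizes at $\nu(A,B)$, with $\cK_{\nu}=\R^n$ under Kalman controllability, the minimal number of atoms is $\nu(A,B)$. The formula involves neither the $\tau_j$ nor the values $m_j>0$, which gives the independence claim.
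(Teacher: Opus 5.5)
Your proof is correct and complete in substance. The paper itself gives no proof to compare with. It recalls this theorem from the cited work and records for later use only your Step~1, the forward recurrence \eqref{eq:atomic-rec-background}. It adds the remark that no inverse of $I+m_jA$ is needed.

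Your argument agrees with that remark. Step~3 only divides by the scalar $m_{k+1}$, so it uses $m_{k+1}\neq0$ rather than positivity, and it never needs regressivity.

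Steps~2--3 can be stated more compactly. The polynomials $p_j(\lambda)=m_j\prod_{i=j+1}^{N}(1+m_i\lambda)$, $j=1,\ldots,N$, have exact degrees $N-j$ with nonzero leading coefficients $m_j\prod_{i>j}m_i$. They therefore form a basis of the polynomials of degree at most $N-1$. Writing each $\lambda^r$ with $r\le N-1$ in this basis gives $\sum_j\Ran\bigl(p_j(A)B\bigr)=\cK_N(A,B)$ directly. This triangular change of basis is what your induction on $s$ establishes one degree at a time.

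A few small points to tidy; none affects correctness:
\begin{itemize}
\item The base case needs separate treatment. Your inclusion $\Ran B+(I+m_{k+1}A)\Ran B\subseteq R_{k+1}$ uses $\Ran B\subseteq R_k$, which requires $k\ge1$. For $k=0$, simply note $R_1=\Ran B=\cK_1(A,B)$.
\item The induction on $s$ runs upward, not ``downward''.
\item The inclusion $R_{k+1}\subseteq\cK_{k+1}(A,B)$ needed to close the inductive step is Step~2 applied with $k+1$ events.
\item If exact controllability is meant from arbitrary initial states, add one line. The reachable set from $x_0$ is the affine translate $\prod_{j=N}^{1}(I+m_jA)x_0+\cK_N(A,B)$, so it equals $\R^n$ if and only if $\cK_N(A,B)=\R^n$.
\item For the minimal number of atoms, $\cK_r(A,B)\neq\R^n$ for $r<\nu(A,B)$ follows from the definition of $\nu$ together with $\cK_n(A,B)=\R^n$ under Kalman controllability.
\end{itemize}
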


The theorem says that, from the viewpoint of ordinary reachability, every positive event counts as one opportunity to enlarge the Krylov span.  Its precise positive mass changes the numerical state transition and control cost, but not the reachable subspace.  Later we shall see that a uniform super-$g$-H\"older budget breaks this insensitivity: an event can continue to count for ordinary reachability while becoming too small to carry a bounded normalized jump.

For later use, the underlying forward recurrence is
\begin{equation}\label{eq:atomic-rec-background}
 x_j=(I+m_jA)x_{j-1}+m_jBu_j,
 \qquad j=1,\ldots,N,
\end{equation}
where $x_j$ denotes the state just after the $j$th event and $x_0$ the state before the first event.  No inverse jump matrix is needed in this recurrence, so the finite-atomic identity does not require a regressivity hypothesis.

\section{The regularity filter}\label{sec:regularity}

The singular limit studied later depends on a sharp change at $\alpha=1$.  At the critical exponent, the seminorm controls the Stieltjes derivative in the usual bounded-velocity sense.  Above it, nonatomic motion disappears and only atomic jumps remain.  We record exactly these two facts.

\begin{proposition}[The critical identity]\label{prop:alpha1}
If $x\in AC_g(I;\mathbb R^n)$, then, with extended values,
\begin{equation}\label{eq:alpha1}
 [x]_{1,g}=\|x'_g\|_{L^\infty(\mu_g)}.
\end{equation}
Hence a controlled trajectory satisfies
\[
 [x]_{1,g}=\|A(\cdot)x(\cdot)+B(\cdot)u(\cdot)\|_{L^\infty(\mu_g)}.
\]
\end{proposition}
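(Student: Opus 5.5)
The plan is to prove the two inequalities $[x]_{1,g}\le\|x'_g\|_{L^\infty(\mug)}$ and $[x]_{1,g}\ge\|x'_g\|_{L^\infty(\mug)}$ separately, using only the representation \eqref{eq:AC-representation}. The second displayed identity is then immediate: for a controlled trajectory, $x'_g=A(\cdot)x(\cdot)+B(\cdot)u(\cdot)$ holds $\mug$-a.e. by \eqref{eq:general-system}, and the $L^\infty(\mug)$ norm does not see $\mug$-null sets.

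\emph{Upper bound.} Set $M:=\|x'_g\|_{L^\infty(\mug)}$; if $M=\infty$ there is nothing to prove. Take $s<t$ with $g(s)\ne g(t)$. By \eqref{eq:AC-representation},
\[
 x(t)-x(s)=\int_{[s,t)}x'_g\,d\mug ,
\]
so $\|x(t)-x(s)\|\le M\,\mug([s,t))=M\,(g(t)-g(s))$ by \eqref{eq:measure}. The same representation shows that $x(t)=x(s)$ whenever $g(t)=g(s)$, because then $\mug([s,t))=0$. Hence the convention attached to \eqref{eq:holder} is automatically satisfied, and $[x]_{1,g}\le M$.

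\emph{Lower bound.} Suppose $L:=[x]_{1,g}<\infty$. It suffices to show that $\|x'_g\|\le L$ holds $\mug$-a.e. Let $\nu$ be the vector measure $x'_g\,d\mug$. For every half-open interval $[s,t)\subset I$ we have
\[
 \|\nu([s,t))\|=\|x(t)-x(s)\|\le L\,\mug([s,t)).
\]
This inequality extends to finite disjoint unions of such intervals, and then, by outer regularity and a monotone class argument, to every Borel set $E$: $\|\nu(E)\|\le L\,\mug(E)$. It is cleanest to fix a unit vector $e$ and apply the argument to the scalar measure $\langle e,x'_g\rangle\,d\mug$. Choosing $E=\{\langle e,x'_g\rangle>L\}$ forces $\mug(E)=0$. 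Taking a countable dense set of directions $e$ then gives $\|x'_g\|\le L$ $\mug$-a.e.

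\emph{Main obstacle.} The delicate step is passing from intervals to arbitrary Borel sets, since $\mug$ may contain atoms and singular-continuous parts. I would handle it by comparing the two finite signed measures $L\mug\pm\langle e,x'_g\rangle\mug$. Both are nonnegative on the semiring of half-open intervals $[s,t)$, so by uniqueness of Carathéodory extension they are nonnegative measures on the Borel sets. One boundary point needs a separate check, namely $\{b\}$ together with intervals reaching $b$. Since $\mug$ is determined by \eqref{eq:measure} on $[a,b)$, the point $b$ carries no mass, so this causes no difficulty.

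At atoms the lower bound can also be seen directly. Taking $s=\tau$ and $t\downarrow\tau$, left continuity of $g$ and \eqref{eq:jump} give $\|x'_g(\tau)\|\le L$. This serves as a consistency check on the general argument.
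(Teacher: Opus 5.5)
Your proof is correct. The upper bound coincides with the paper's, but your lower bound takes a genuinely different route.

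\textbf{The paper's route.} It argues pointwise. It invokes the Frigon--Pouso fundamental theorem: at $\mu_g$-a.e.\ non-atomic point, the canonical density $x'_g(t)$ is a limit of $g$-difference quotients, each bounded by $[x]_{1,g}$. Atoms are handled separately through $x(\tau+)-x(\tau)=m_\tau x'_g(\tau)$.

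\textbf{Your route.} You never differentiate. You treat $x'_g\,d\mu_g$ and $L\,\mu_g$ as finite measures and observe that $L\mu_g\pm\langle e,x'_g\rangle\mu_g$ is nonnegative on the $\pi$-system of half-open intervals. That $\pi$-system contains $[a,b)$ and generates the Borel sets. You then extend nonnegativity to all Borel sets: extend the restricted premeasure by Carath\'eodory, then apply $\pi$--$\lambda$ uniqueness for finite signed measures. Finally you read off the density bound on superlevel sets, using countably many directions.

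\textbf{What each buys.} The paper's argument is shorter, given the cited differentiation theorem. It also mirrors the atomic/non-atomic split used later in \cref{prop:filter}. Yours is self-contained and does not need a Lebesgue-type differentiation theorem for $\mu_g$, which is the nontrivial input when $\mu_g$ has a singular-continuous part. It treats atoms, absolutely continuous and singular-continuous parts uniformly. In effect it proves the general fact that the $L^\infty(\mu_g)$ norm of a Radon--Nikodym density equals the supremum of $\|\nu(J)\|/\mu_g(J)$ over half-open intervals $J$.

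Your conclusion that $\mu_g(\{b\})=0$ is right. The cleanest justification is that the $\mu_g$-a.e.\ uniqueness of the density in \eqref{eq:AC-representation} would fail otherwise, since no integral $\int_{[a,t)}$ sees the point $b$.

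In your atomic consistency check, the fact you actually use is $g(t)\to g(\tau+)$ as $t\downarrow\tau$, not left continuity of $g$. This is only a side remark and does not affect the argument.
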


\begin{proof}
The integral representation gives $[x]_{1,g}\le\|x'_g\|_\infty$.  Conversely, at non-atomic points the canonical derivative is the limit of $g$-difference quotients, while at an atom $\tau$ one has $x(\tau+)-x(\tau)=m_\tau x'_g(\tau)$.  Both are bounded by $[x]_{1,g}$ outside a common $\mu_g$-null set.  This yields the reverse inequality.
\end{proof}

For $\alpha>1$ the picture changes.  The following is the controlled specialization of the pure-jump representation of \citet{IlterDuruKoca2026}.

\begin{proposition}[Super-$g$-H\"older regularity filter]\label{prop:filter}
Let $\alpha>1$ and let $(x,u)$ solve \eqref{eq:general-system}.  If $[x]_{\alpha,g}<\infty$, then
\begin{equation}\label{eq:nonatomic-cancel}
 A(t)x(t)+B(t)u(t)=0\qquad \mu_g^{\rm c}\text{-a.e.},
\end{equation}
and
\begin{equation}\label{eq:super-seminorm}
 [x]_{\alpha,g}
 =\sup_{\tau\in D_g}
 \frac{\|A(\tau)x(\tau)+B(\tau)u(\tau)\|}{m_\tau^{\alpha-1}}.
\end{equation}
The trajectory is therefore determined by its atomic jumps.
\end{proposition}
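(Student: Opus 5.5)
I would prove the proposition in two parts: first show that the nonatomic part of the motion vanishes, then identify the seminorm with a supremum over atoms. Throughout, $x\in\ACg$ with $x'_g=Ax+Bu$ $\mug$-a.e., and I write $v:=x'_g$. With the representation \eqref{eq:AC-representation} and the decomposition \eqref{eq:measure-decomp}, for $s<t$ we have
\[
 x(t)-x(s)=\int_{[s,t)}v\,d\mugc+\sum_{\tau\in\Dg\cap[s,t)}m_\tau v(\tau).
\]
Set $\lambda:=v\,\mugc$, a finite $\R^n$-valued nonatomic measure. The goal of the first step is $\lambda=0$, which is equivalent to \eqref{eq:nonatomic-cancel}.

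\textbf{Step 1 (killing the continuous part).} Let $C:=[x]_{\alpha,g}<\infty$. I will pass to the clock variable. Let $\nu:=g_\#\mugc$ on $[g(a),g(b)]$; it is dominated by Lebesgue measure, since $\mugc([s,t))\le g(t)-g(s)$. I would show that the total variation of $\lambda$ satisfies $|\lambda|\le \varepsilon\,\mugc$ for every $\varepsilon>0$. Fix an interval $[s,t)$ and partition it into pieces $[t_{i-1},t_i)$ on which $g(t_i)-g(t_{i-1})\le\delta$. This is possible up to atoms: choose the partition points so that every atom of mass $>\delta$ is isolated inside its own tiny piece. There are finitely many such atoms, and by left-continuity the nonatomic mass of a small piece containing only an atom is small.

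On a piece without a large atom, the Hölder bound gives
\[
 \|\lambda([t_{i-1},t_i))\|\le \|x(t_i)-x(t_{i-1})\|+\sum_{\tau}m_\tau\|v(\tau)\|.
\]
This is the delicate point, because the atomic contributions must be separated out. To handle it I would first prove the atomic bound $m_\tau\|v(\tau)\|=\|x(\tau+)-x(\tau)\|\le C m_\tau^\alpha$. This follows by taking $s=\tau$, $t\downarrow\tau$ in \eqref{eq:holder} and using right limits with $g(t)\to g(\tau+)$. Consequently the total atomic mass in a piece is at most $C\sum m_\tau^\alpha\le C\delta^{\alpha-1}\sum m_\tau$, which is $o(1)$ relative to the $g$-increment. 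Hence $\|\lambda(P_i)\|\le C'\delta^{\alpha-1}(g(t_i)-g(t_{i-1}))$. Summing over pieces and letting $\delta\to0$, using $\alpha>1$, gives $\|\lambda([s,t))\|=0$ for all $s<t$. Therefore $\lambda=0$, which is \eqref{eq:nonatomic-cancel}. I expect this partition bookkeeping, which controls the atoms via the $m_\tau^\alpha$ bound and uses the finiteness of atoms above the threshold $\delta$, to be the main obstacle; alternatively I would simply cite the pure-jump representation of \citet{IlterDuruKoca2026}, of which this is the controlled specialization.

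\textbf{Step 2 (the seminorm formula).} After Step 1, $x(t)-x(s)=\sum_{\tau\in[s,t)}m_\tau v(\tau)$. Let $S$ be the supremum on the right of \eqref{eq:super-seminorm}. The bound $S\le[x]_{\alpha,g}$ is exactly the atomic bound obtained in Step 1, since $\|v(\tau)\|/m_\tau^{\alpha-1}=\|x(\tau+)-x(\tau)\|/m_\tau^\alpha$.

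The converse inequality is not automatic, because superadditivity works against us: for $\alpha>1$ one has $\sum m_\tau^\alpha\le(\sum m_\tau)^\alpha$. We can only deduce
\[
 \|x(t)-x(s)\|\le S\sum m_\tau^\alpha\le S\,(g(t)-g(s))^\alpha,
\]
where the last step uses $\sum_{\tau\in[s,t)}m_\tau\le g(t)-g(s)$. That this is the right direction is exactly the point, so equality \eqref{eq:super-seminorm} follows. Finally, the convention that $x$ is constant where $g$ is flat holds automatically from the representation. The trajectory is then the sum of its jumps, which proves the last sentence of the proposition.
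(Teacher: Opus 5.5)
Your proof is correct, but it takes a different route from the paper. The paper's proof is purely by citation: it invokes Proposition~3.2 of Ilter--Duru--Koca for the vanishing of the canonical derivative on the nonatomic part, and their Theorem~3.3 for the exact normalized-jump formula and pure-jump representation. You prove both facts directly.

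\textbf{Step~1.} You replace the first citation by a partition argument. You isolate the finitely many large atoms and use the H\"older bound on pieces of $g$-increment at most $\delta$. Combined with the atomic estimate $m_\tau\|v(\tau)\|\le[x]_{\alpha,g}\,m_\tau^\alpha$, this gives $\|\lambda(P_i)\|\le 2[x]_{\alpha,g}\,\delta^{\alpha-1}\mu_g(P_i)$, and you let $\delta\to0$.

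\textbf{Step~2.} You replace the second citation by the elementary inequality $\sum m_\tau^\alpha\le(\sum m_\tau)^\alpha$ together with $\sum_{\tau\in[s,t)}m_\tau\le g(t)-g(s)$.

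\textbf{Comparison.} The citation route is short and leaves all measure-theoretic bookkeeping to the source. Your route is self-contained. It also shows that $\alpha>1$ enters only through the vanishing factor $\delta^{\alpha-1}$, the same mechanism as the factor $\eps^{\alpha-1}$ in the later compatibility identity \eqref{eq:hidden-Q}.

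\textbf{Points to tidy.}
\begin{enumerate}
\item The smallness of an isolated piece $[\tau,\tau+\eta)$ comes from $\mu_g^{\rm c}$ being nonatomic (continuity from above) and from absolute continuity of $\int\|v\|\,d\mu_g^{\rm c}$. It does not come from left-continuity of $g$.
\item The remaining pieces should be built with a threshold that lets each greedy step advance. For example, treat atoms of mass $\ge\delta/2$ as large. Otherwise an atom of mass exactly $\delta$ followed immediately by continuous mass cannot start a piece of $g$-increment at most $\delta$.
\item Your remark that superadditivity ``works against us'' in Step~2 is backwards. It is exactly the inequality that yields $[x]_{\alpha,g}\le S$, as your displayed chain shows.
\end{enumerate}
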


\begin{proof}
By \citet[Proposition~3.2]{IlterDuruKoca2026}, the canonical derivative of a super-$g$-H\"older map vanishes on the nonatomic part of the Stieltjes measure.  The state equation gives \eqref{eq:nonatomic-cancel}.  At each atom, the jump law \eqref{eq:controlled-jump} and \citet[Theorem~3.3]{IlterDuruKoca2026} give the exact normalized-jump formula \eqref{eq:super-seminorm} and the corresponding pure-jump representation.
\end{proof}

The contrast is the one needed later.  At $\alpha=1$, a trajectory may still move along a continuous part of the clock while keeping bounded Stieltjes velocity.  For $\alpha>1$, nonatomic state variation is suppressed; if the clock is continuous, the trajectory must be constant.  Nontrivial supercritical motion therefore survives only through atoms, so from now on we restrict attention to finite atomic clocks and ask what happens when one of those atoms disappears.
\section{Finite atomic clocks and the regularity cost}\label{sec:atomic}

Assume from now on that the clock is purely atomic as in \eqref{eq:pure-atomic-clock}, that $A,B$ are constant, that $x_0=0$, and that $\alpha>1$.  The natural variables for the singular problem are not the controls themselves, but the state jumps measured on the scale imposed by regularity.  If $x_j$ is the state immediately after event $j$, define the normalized jumps
\begin{equation}\label{eq:wdef}
 w_j:=\frac{x_j-x_{j-1}}{m_j^\alpha},\qquad j=1,\ldots,N,
\end{equation}
These variables satisfy a simple event equation.  The recurrence \eqref{eq:atomic-rec-background} then gives
\begin{equation}\label{eq:w-dynamics}
 m_j^{\alpha-1}w_j=Ax_{j-1}+Bu_j.
\end{equation}
Summing the jumps also gives
\begin{equation}\label{eq:x-from-w}
 x_j=\sum_{k=1}^j m_k^\alpha w_k.
\end{equation}
By \citet[Theorem~3.3]{IlterDuruKoca2026},
\begin{equation}\label{eq:atomic-holder-exact}
 [x]_{\alpha,g}=\max_{1\le j\le N}\|w_j\|.
\end{equation}
Thus the super-$g$-H\"older cost has a simple eventwise meaning: it is the largest state jump after each jump has been divided by the scale assigned to the event that carries it.

We now express feasibility directly in these variables.  Set $X_N=(\mathbb R^n)^N$ with $\|w\|_{\infty,2}=\max_j\|w_j\|$, and define
\begin{equation}\label{eq:V}
 \mathcal V_{\alpha,\mathbf m}(A,B)
 :=\left\{w\in X_N:
 m_j^{\alpha-1}w_j-A\sum_{k<j}m_k^\alpha w_k\in\operatorname{Ran}B,
 \ j=1,\ldots,N\right\}.
\end{equation}
Its endpoint map is
\begin{equation}\label{eq:T}
 T_{\alpha,\mathbf m}w:=\sum_{j=1}^Nm_j^\alpha w_j.
\end{equation}
For a terminal direction $d\in\mathbb R^n$ define
\begin{equation}\label{eq:Lambda-def}
 \Lambda_{\alpha,\mathbf m}(d)
 :=\inf\{[x]_{\alpha,g}:x_0=0,\ x_N=d,\ x\text{ satisfies \eqref{eq:atomic-rec-background}}\},
\end{equation}
with value $+\infty$ when $d$ is not ordinarily reachable.

\begin{proposition}[Finite-dimensional minimum-amplitude formulation]\label{prop:quotient}
The elements of $\mathcal V_{\alpha,\mathbf m}(A,B)$ are exactly the normalized-jump sequences of controlled trajectories, and
\[
 T_{\alpha,\mathbf m}(\mathcal V_{\alpha,\mathbf m}(A,B))=\mathcal K_N(A,B).
\]
For $d\in\mathcal K_N(A,B)$,
\begin{equation}\label{eq:Lambda-w}
 \Lambda_{\alpha,\mathbf m}(d)
 =\min_{\substack{w\in\mathcal V_{\alpha,\mathbf m}(A,B)\\T_{\alpha,\mathbf m}w=d}}
 \|w\|_{\infty,2},
\end{equation}
and the minimum is attained.  Moreover,
\begin{equation}\label{eq:basic-lower}
 \Lambda_{\alpha,\mathbf m}(d)
 \ge \frac{\|d\|}{\sum_{j=1}^Nm_j^\alpha}.
\end{equation}
\end{proposition}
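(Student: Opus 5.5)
The plan is to build an explicit bijection between controlled trajectories from the zero state and elements of $\mathcal V_{\alpha,\mathbf m}(A,B)$, and then reduce everything to finite-dimensional linear algebra.

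\emph{Trajectories give elements of $\mathcal V$.} Take a trajectory $(x,u)$ of the recurrence \eqref{eq:atomic-rec-background} with $x_0=0$ and define $w$ by \eqref{eq:wdef}. Then \eqref{eq:x-from-w} gives $x_{j-1}=\sum_{k<j}m_k^\alpha w_k$. Dividing the recurrence by $m_j>0$ yields \eqref{eq:w-dynamics}, so
\[
 m_j^{\alpha-1}w_j-A\sum_{k<j}m_k^\alpha w_k=Bu_j\in\operatorname{Ran}B ,
\]
and hence $w\in\mathcal V_{\alpha,\mathbf m}(A,B)$.

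\emph{Elements of $\mathcal V$ give trajectories.} Conversely, given $w\in\mathcal V_{\alpha,\mathbf m}(A,B)$, define $x_j$ by \eqref{eq:x-from-w}. For each $j$, choose $u_j$ with
\[
 Bu_j=m_j^{\alpha-1}w_j-Ax_{j-1},
\]
which is possible by the membership condition. Multiplying by $m_j$ recovers \eqref{eq:atomic-rec-background}. Since the clock is purely atomic, any such choice of atomic values defines an admissible $u\in L^2(\mu_g)$. The resulting trajectory has normalized jumps exactly $w$ and endpoint $x_N=T_{\alpha,\mathbf m}w$.

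\emph{Image of the endpoint map.} By the correspondence, $T_{\alpha,\mathbf m}(\mathcal V_{\alpha,\mathbf m})$ is the set of endpoints of trajectories from $0$, which is $\mathcal R_g(a,b;0)=\mathcal K_N(A,B)$ by \Cref{thm:sourceA-atomic}.

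\emph{Cost identity.} By \eqref{eq:atomic-holder-exact}, the cost of each trajectory equals $\|w\|_{\infty,2}$. The infimum in \eqref{eq:Lambda-def} therefore equals the infimum of $\|w\|_{\infty,2}$ over the affine set
\[
 \mathcal V_{\alpha,\mathbf m}\cap T_{\alpha,\mathbf m}^{-1}(d).
\]
This set is nonempty when $d\in\mathcal K_N(A,B)$. It is also closed, since it is the intersection of a linear subspace (a preimage of $(\operatorname{Ran}B)^N$ under a linear map) with an affine preimage.

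\emph{Attainment.} This is the only step needing care. Fix a feasible $w^0$. The norm $\|\cdot\|_{\infty,2}$ is continuous and coercive on the finite-dimensional space $X_N$. Its sublevel set $\{\|w\|_{\infty,2}\le\|w^0\|_{\infty,2}\}$ intersected with the closed feasible set is therefore compact and nonempty, so the minimum is attained.

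\emph{Lower bound.} For any feasible $w$,
\[
 \|d\|=\Bigl\|\sum_j m_j^\alpha w_j\Bigr\|\le\sum_j m_j^\alpha\|w_j\|\le\Bigl(\sum_j m_j^\alpha\Bigr)\|w\|_{\infty,2}.
\]
Minimizing over $w$ gives \eqref{eq:basic-lower}.

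I expect no serious obstacle. The main points to state carefully are the division by $m_j>0$ and the fact that $u_j$ only needs to exist, not be unique; both are immediate.
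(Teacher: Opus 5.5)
Your proposal is correct and follows the paper's proof essentially step for step: the two-way correspondence via \eqref{eq:w-dynamics} with the control chosen so that $Bu_j=m_j^{\alpha-1}w_j-Ax_{j-1}$, the range identification through \cref{thm:sourceA-atomic}, the cost identity from \eqref{eq:atomic-holder-exact}, attainment by finite-dimensional compactness, and the triangle-inequality lower bound. The only difference is cosmetic: you apply Weierstrass directly to the feasible set intersected with the sublevel set $\{\|w\|_{\infty,2}\le\|w^0\|_{\infty,2}\}$, whereas the paper extracts a convergent subsequence from a bounded minimizing sequence, which is the same compactness argument.
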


\begin{proof}
Equations \eqref{eq:w-dynamics} show that a controlled trajectory yields precisely the constraints in \eqref{eq:V}; conversely those constraints allow a control $u_j$ with $Bu_j=m_j^{\alpha-1}w_j-Ax_{j-1}$.  The endpoint is \eqref{eq:T}, whose range is the ordinary reachable space $\mathcal K_N(A,B)$ by \cref{thm:sourceA-atomic}.  Formula \eqref{eq:atomic-holder-exact} gives \eqref{eq:Lambda-w}.  For fixed reachable $d$, the feasible set
\[
 F_d:=\{w\in\mathcal V_{\alpha,\mathbf m}(A,B):T_{\alpha,\mathbf m}w=d\}
\]
is a nonempty closed affine subset of the finite-dimensional space $X_N$.  Let $\lambda$ be the infimum in \eqref{eq:Lambda-w} and choose a minimizing sequence $(w^{(r)})\subset F_d$ with $\|w^{(r)}\|_{\infty,2}\le\lambda+1$ for all sufficiently large $r$.  That tail lies in the compact set $F_d\cap\{\|w\|_{\infty,2}\le\lambda+1\}$, so a convergent subsequence has a limit in $F_d$; continuity of the norm shows that the limit attains $\lambda$.  Finally,
\[
 \|d\|\le\sum_{j=1}^Nm_j^\alpha\|w_j\|
 \le\left(\sum_{j=1}^Nm_j^\alpha\right)\|w\|_{\infty,2},
\]
which yields \eqref{eq:basic-lower}.
\end{proof}

Once written in normalized-jump variables, \eqref{eq:Lambda-w} is a classical finite-dimensional minimum-amplitude problem of the type studied by \citet{Cadzow1974,Kolev1975}.  What matters here is not that optimization principle.  The issue is what happens to its feasible geometry when one event weight tends to zero.
\section{Vanishing events: exact bounded-cost reachability}\label{sec:vanishing}

We now let one event disappear.  Ordinary reachability cannot see this process until the event weight actually reaches zero: every positive event still contributes to the same truncated Krylov geometry.  A uniform regularity budget reacts earlier, because the jump at the vulnerable event is measured against a shrinking scale.  It is useful to think of the trajectory in three pieces.  The controls before event $\ell$ build an intermediate state; the vulnerable event tests whether that state is compatible with the singular limit; the remaining controls then complete the terminal motion.

Fix an event index $\ell\in\{1,\ldots,N\}$.  All masses except the $\ell$th are fixed and positive, while
\begin{equation}\label{eq:epsilon-mass}
 m_\ell=\eps>0,
 \qquad \eps\downarrow0.
\end{equation}
Write $\mathbf m(\eps)$ for the resulting mass vector and abbreviate
\[
 \Lam_\eps(d):=\Lam_{\alpha,\mathbf m(\eps)}(d).
\]
Let $Q$ be the Euclidean orthogonal projection onto $(\Ran B)^\perp$ and define
\begin{equation}\label{eq:Sdef}
 S:=\Ker(QA)=A^{-1}(\Ran B)
 =\{y\in\R^n:Ay\in\Ran B\}.
\end{equation}
The subspace $S$ has a direct control interpretation: at a state $y\in S$, the drift $Ay$ can be cancelled by some control value because there exists $u$ with $Ay+Bu=0$.  At a state outside $S$, a component of the drift points in a direction that the control cannot instantaneously oppose.

The states reachable immediately before the disappearing event form
\begin{equation}\label{eq:Rpre}
 \Rpre:=\cK_{\ell-1}(A,B).
\end{equation}
Let
\begin{equation}\label{eq:Pplus}
 P_+:=\prod_{j=N}^{\ell+1}(I+m_jA),
\end{equation}
where the empty product is the identity.  This is the homogeneous propagation from immediately after event $\ell$ to the terminal time.  The terminal subspace generated by controls acting strictly after event $\ell$ is
\begin{equation}\label{eq:Rpost}
 \Rpost:=\cK_{N-\ell}(A,B).
\end{equation}
If event $\ell$ is deleted, the ordinary reachable space is
\begin{equation}\label{eq:Rdeleted}
 \Rdel=\cK_{N-1}(A,B)=P_+\Rpre+\Rpost.
\end{equation}

At this point ordinary reachability and the singular regularity problem can already be distinguished.  The space $\Rdel$ asks only whether the remaining $N-1$ events can reach a target after event $\ell$ is removed.  The set defined next asks a stronger question: can the same target be reached along the original $N$-event systems with a regularity cost that stays bounded uniformly as the $\ell$th event disappears?

\begin{definition}[Uniformly bounded regularity-conditioned reachable subspace]\label{def:B}
Define
\begin{equation}\label{eq:Bdef}
 \cB_\ell
 :=\left\{d\in\R^n:
 \sup_{0<\eps<\eps_0}\Lam_\eps(d)<\infty
 \text{ for some }\eps_0>0\right\}.
\end{equation}
\end{definition}

\subsection{Why a vanishing event leaves a compatibility condition}

The decisive calculation is local to the disappearing event.  It explains the appearance of $A^{-1}(\Ran B)$ before any terminal-space algebra is used.  At the disappearing event, \eqref{eq:w-dynamics} reads
\begin{equation}\label{eq:epsilon-stage}
 \eps^{\alpha-1}w_\ell=Ax_{\ell-1}+Bu_\ell.
\end{equation}
Applying $Q$ yields the fundamental identity
\begin{equation}\label{eq:hidden-Q}
 QA x_{\ell-1}=\eps^{\alpha-1}Qw_\ell.
\end{equation}
The point of projecting with $Q$ is that the control term disappears.  Because $\alpha>1$, the prefactor $\eps^{\alpha-1}$ tends to zero, so bounded normalized jumps force the component of $Ax_{\ell-1}$ outside $\Ran B$ to vanish.  This is the source of the hidden compatibility condition; the rest of the theorem propagates that local restriction to the terminal space.

\begin{theorem}[Exact bounded-cost subspace, propagated form]\label{thm:B-propagated}
For every $\alpha>1$,
\begin{equation}\label{eq:B-propagated}
 \cB_\ell=P_+(\Rpre\cap S)+\Rpost.
\end{equation}
Equivalently, if $d$ does not belong to the right-hand side, then
\begin{equation}\label{eq:diverge-general}
 \Lam_\eps(d)\longrightarrow+\infty
 \qquad(\eps\downarrow0).
\end{equation}
No Kalman-controllability assumption is required.
\end{theorem}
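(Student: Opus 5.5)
We prove the two inclusions separately, working throughout in the normalized-jump formulation of \cref{prop:quotient}. Write $\mathcal C:=P_+(\Rpre\cap S)+\Rpost$.

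\emph{Inclusion $\mathcal C\subseteq\cB_\ell$ (uniform construction).} By linearity of the feasible set and subadditivity of the cost (if $w,w'$ are feasible for $d,d'$ then $w+w'$ is feasible for $d+d'$ with $\|w+w'\|_{\infty,2}\le\|w\|_{\infty,2}+\|w'\|_{\infty,2}$), it suffices to treat the two summands separately.

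For $d\in\Rpost$, set $w_j=0$ for $j\le\ell$. Then $x_\ell=0$, and the remaining $N-\ell$ events have fixed masses independent of $\eps$, so by \cref{thm:sourceA-atomic} some admissible sequence reaches $d$ with an $\eps$-independent cost.

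For $d=P_+y$ with $y\in\Rpre\cap S$, proceed as follows. First steer $0$ to $x_{\ell-1}=y$ using events $1,\dots,\ell-1$; these masses are fixed, so the cost is independent of $\eps$. At event $\ell$, choose $u_\ell$ with $Ay+Bu_\ell=0$, which is possible because $y\in S$; then $w_\ell=0$ and $x_\ell=y$. Afterwards take $u_j=0$, so $x_N=P_+y$ with normalized jumps $w_j=m_j^{1-\alpha}Ax_{j-1}$, again independent of $\eps$.

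\emph{Inclusion $\cB_\ell\subseteq\mathcal C$ (compactness).} Let $d\in\cB_\ell$, take $\eps_k\downarrow0$, and let $w^{(k)}$ be minimizers, which exist by \cref{prop:quotient}, with $\|w^{(k)}\|_{\infty,2}\le M$. Pass to a subsequence along which $w^{(k)}\to w^*$.

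Since $x_{\ell-1}^{(k)}=\sum_{j<\ell}m_j^\alpha w^{(k)}_j$ uses only fixed masses, $x^{(k)}_{\ell-1}\to y:=\sum_{j<\ell}m_j^\alpha w^*_j$. This limit lies in $\Rpre$ because each $x^{(k)}_{\ell-1}\in\Rpre$ and $\Rpre$ is a closed subspace. By \eqref{eq:hidden-Q},
\[
\|QAx^{(k)}_{\ell-1}\|\le\eps_k^{\alpha-1}M\to0,
\]
so $QAy=0$, that is, $y\in S$. Moreover $x^{(k)}_\ell=x^{(k)}_{\ell-1}+\eps_k^\alpha w^{(k)}_\ell\to y$.

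For $j>\ell$, the event equation gives
\[
x^{(k)}_j=(I+m_jA)x^{(k)}_{j-1}+m_jBu^{(k)}_j .
\]
Subtracting the homogeneous propagation yields
\[
d=x^{(k)}_N=P_+x^{(k)}_\ell+r^{(k)},\qquad r^{(k)}\in\Rpost,
\]
where $\Rpost$ is identified with the set of post-event contributions via \cref{thm:sourceA-atomic} applied to the last $N-\ell$ events. Hence $r^{(k)}=d-P_+x^{(k)}_\ell\to d-P_+y$, and this limit lies in $\Rpost$ by closedness. Therefore $d\in P_+(\Rpre\cap S)+\Rpost$.

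\emph{The divergence statement \eqref{eq:diverge-general}.} This is the contrapositive, run along sequences. If $\Lam_\eps(d)\not\to\infty$, then there are $\eps_k\to0$ with $\Lam_{\eps_k}(d)$ bounded, and the compactness argument above applies verbatim along that sequence. Unreachable $d$, with $\Lam_\eps(d)=\infty$, need no argument. This also shows that $\cB_\ell$ defined with $\sup$ coincides with the set defined by a $\liminf$ condition.

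\emph{Main obstacle.} The delicate point is the bookkeeping after event $\ell$: we must ensure that the post-event contribution lies in $\Rpost$ itself and not merely in some $\eps$-dependent space. I would verify this directly by expanding the recurrence: the post-event contribution is $\sum_{j>\ell}\bigl(\prod_{i>j}(I+m_iA)\bigr)m_jBu_j$, and each term lies in $\cK_{N-\ell}$ because the products are polynomials in $A$ of degree at most $N-j$. None of these masses depends on $\eps$, so closedness and the limit passage are legitimate.
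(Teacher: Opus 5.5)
Your proof is correct and follows essentially the same route as the paper. For necessity, you take bounded minimizers, pass to a limit $y$ of $x_{\ell-1}$ in the closed subspace $\Rpre$, use the projection identity to get $QAy=0$, and use closedness of $\Rpost$ for the post-event remainder. Sufficiency comes from exact drift cancellation at event $\ell$, and the divergence is the contrapositive run along sequences. The only differences are cosmetic: you split the sufficiency into the two summands via subadditivity of $\Lam_\eps$ where the paper builds one combined trajectory, and you check explicitly that post-event contributions lie in $\cK_{N-\ell}(A,B)$, which the paper takes for granted.
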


The formula mirrors the three stages just described.  The limiting pre-event state must lie in $\Rpre$, because only the first $\ell-1$ events can create it.  It must also lie in $S$, because its drift has to be cancellable when the vulnerable event loses its weight.  Once that compatible state has passed the singular stage, the later controls may add any direction in $\Rpost$.

\begin{proof}
The two inclusions use different ideas.  For necessity, start from an arbitrary family with uniformly bounded cost and pass to a limiting pre-event state; the projection identity then forces compatibility.  For sufficiency, start from a compatible pre-event state and cancel its drift exactly at the vulnerable event.

\emph{Necessity: bounded cost forces compatibility.}  Let $d\in\cB_\ell$.  By definition there are $C>0$ and $\eps_0>0$ such that $\Lam_\eps(d)\le C$ for $0<\eps<\eps_0$.  Choose an arbitrary sequence $\eps_r\downarrow0$ and, for each $r$, a minimizing normalized-jump sequence $w^{(r)}$ as in \cref{prop:quotient}.  Then
\begin{equation}\label{eq:w-bound}
 \norm{w_j^{(r)}}\le C
 \qquad(j=1,\ldots,N).
\end{equation}
For $j<\ell$ all masses are fixed and positive, so \eqref{eq:x-from-w} gives a uniform bound on $x_j^{(r)}$.  In particular, $(x_{\ell-1}^{(r)})_r$ is bounded.  Passing to a subsequence,
\begin{equation}\label{eq:pre-limit}
 x_{\ell-1}^{(r)}\longrightarrow y
 \qquad\text{for some }y\in\R^n.
\end{equation}
Every $x_{\ell-1}^{(r)}$ belongs to the fixed finite-dimensional subspace $\Rpre$, which is closed; hence $y\in\Rpre$.

Projecting \eqref{eq:epsilon-stage} and using \eqref{eq:w-bound},
\[
 \norm{QA x_{\ell-1}^{(r)}}
 =\eps_r^{\alpha-1}\norm{Qw_\ell^{(r)}}
 \le C\eps_r^{\alpha-1}\longrightarrow0.
\]
Continuity of $QA$ and \eqref{eq:pre-limit} give $QAy=0$, so $y\in S$.

The state jump at the disappearing event satisfies
\begin{equation}\label{eq:jump-goes-zero}
 \norm{x_\ell^{(r)}-x_{\ell-1}^{(r)}}
 =\eps_r^\alpha\norm{w_\ell^{(r)}}
 \le C\eps_r^\alpha\longrightarrow0,
\end{equation}
therefore $x_\ell^{(r)}\to y$.  For each $r$, split the terminal state into the homogeneous propagation of $x_\ell^{(r)}$ and the contribution of the post-event controls:
\begin{equation}\label{eq:terminal-split-r}
 d=P_+x_\ell^{(r)}+r_+^{(r)},
 \qquad r_+^{(r)}\in\Rpost.
\end{equation}
Since $x_\ell^{(r)}\to y$ and $d$ is fixed, $r_+^{(r)}\to d-P_+y$.  The subspace $\Rpost$ is closed, hence $d-P_+y\in\Rpost$.  We have found
\[
 y\in\Rpre\cap S,
 \qquad
 d=P_+y+r_+,
 \quad r_+\in\Rpost,
\]
which proves
\[
 \cB_\ell\subseteq P_+(\Rpre\cap S)+\Rpost.
\]

\emph{Sufficiency: a compatible intermediate state gives a uniform recovery family.}  Let
\[
 d=P_+y+r_+,
 \qquad
 y\in\Rpre\cap S,
 \quad r_+\in\Rpost.
\]
Because $y\in\Rpre$, choose once and for all controls at the first $\ell-1$ events that steer zero to $y$.  Their masses do not depend on $\eps$, so their normalized jumps and their contribution to the maximum norm are fixed.

Since $y\in S$, $Ay\in\Ran B$.  Choose a vector $u_\ell^*$ with
\begin{equation}\label{eq:cancel-control}
 Bu_\ell^*=-Ay.
\end{equation}
At the $\ell$th event, \eqref{eq:atomic-rec-background} then gives
\[
 x_\ell=(I+\eps A)y+\eps Bu_\ell^*=y.
\]
Thus the disappearing event produces exactly zero state jump and
\begin{equation}\label{eq:zero-well}
 w_\ell=0
\end{equation}
for every $\eps>0$.

Finally, because $r_+\in\Rpost$, choose fixed post-event controls whose terminal control contribution is $r_+$.  Starting from $y$ after event $\ell$, the terminal state is then $P_+y+r_+=d$.  All post-event masses are fixed, and the corresponding normalized state jumps depend only on the fixed state $y$, fixed masses and fixed controls; they are independent of $\eps$.  The maximum in \eqref{eq:atomic-holder-exact} is therefore bounded by a constant independent of $\eps$.  Hence $d\in\cB_\ell$, proving the reverse inclusion.

It remains to justify the divergence statement.  Suppose $d\notin\cB_\ell$ but $\Lam_\eps(d)$ did not tend to $+\infty$.  Then there would exist $\eps_r\downarrow0$ and $C<\infty$ with $\Lam_{\eps_r}(d)\le C$.  Repeating the necessity argument along this sequence would put $d$ in the right-hand side of \eqref{eq:B-propagated}; the sufficiency argument would then imply $d\in\cB_\ell$, a contradiction.
\end{proof}

\subsection{Mass-independent Krylov form}

The propagated form is natural for the proof, but it still contains the mass-dependent matrix $P_+$.  The next observation removes it.  Once the pre-event state is compatible, homogeneous propagation after the disappearing event changes that state only by directions that the post-event controls can already generate.  Consequently the bounded-cost \emph{subspace} is independent of the remaining positive masses, even though the numerical value of the cost is not.

\begin{lemma}[Post-event propagation of a compatible state]\label{lem:prop-simplify}
Let $q=N-\ell$.  If $y\in S$, then
\begin{equation}\label{eq:prop-mod}
 P_+y-y\in\cK_q(A,B)=\Rpost.
\end{equation}
\end{lemma}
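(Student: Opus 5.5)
Since $P_+=\prod_{j=N}^{\ell+1}(I+m_jA)$ is a product of $q$ factors, the plan is to expand it as a polynomial in $A$. Expanding gives
\[
 P_+=I+\sum_{k=1}^{q}c_kA^k,
\]
where $c_k$ is the $k$th elementary symmetric polynomial in the post-event masses $m_{\ell+1},\ldots,m_N$. When $q=0$ the product is empty, so $P_+=I$ and \eqref{eq:prop-mod} is trivial. For $q\ge1$ it follows that
\[
 P_+y-y=\sum_{k=1}^{q}c_kA^ky .
\]
The claim therefore reduces to showing $A^ky\in\cK_q(A,B)$ for $1\le k\le q$. Only this membership matters, so the actual values of the $c_k$ play no role.

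The key step uses $y\in S$. By \eqref{eq:Sdef} we have $Ay\in\Ran B$, so we may write $Ay=Bv$ for some $v$. Then
\[
 A^ky=A^{k-1}Bv\in\Ran A^{k-1}B\subseteq\cK_k(A,B)\subseteq\cK_q(A,B)
\]
for $1\le k\le q$. Summing over $k$ gives $P_+y-y\in\cK_q(A,B)=\Rpost$, as claimed.

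The only point needing care is bookkeeping. One checks that the highest power of $A$ appearing in the expansion is $A^q$, and that $A^q y$ lies in $\Ran A^{q-1}B$, which is contained in $\cK_q$ by \eqref{eq:Kdef}. This is where compatibility of $y$ saves exactly one Krylov step: for a general $y\in\Rpre$ one only gets $A^ky$ in a larger Krylov space, whereas for $y\in S$ each power $A^k y$ falls one index lower.

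With the lemma in hand, the mass-independent form of \Cref{thm:B-propagated} follows. We get $P_+(\Rpre\cap S)+\Rpost=(\Rpre\cap S)+\Rpost$, which is \eqref{eq:intro-main}.
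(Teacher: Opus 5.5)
Your proof is correct and follows the paper's argument: you expand $P_+=I+\sum_{k=1}^q c_kA^k$ and use $Ay=Bv$ to get $A^ky=A^{k-1}Bv\in\Ran A^{k-1}B\subseteq\cK_q(A,B)$ for $1\le k\le q$, with the case $q=0$ handled by the empty product. Identifying the $c_k$ as elementary symmetric polynomials in the post-event masses is a harmless extra detail that the paper leaves implicit.
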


\begin{proof}
Since $P_+$ is a product of $q$ factors of the form $I+m_jA$, there are scalars $c_1,\ldots,c_q$ such that
\[
 P_+=I+c_1A+\cdots+c_qA^q.
\]
Because $y\in S$, there exists $v\in\R^m$ with $Ay=Bv$.  Consequently,
\[
 A^ry=A^{r-1}Bv\in\Ran(A^{r-1}B)\subseteq\cK_q(A,B)
 \qquad(1\le r\le q).
\]
Therefore
\[
 P_+y-y=\sum_{r=1}^q c_rA^ry\in\cK_q(A,B).
\]
For $q=0$ the statement is the empty-product identity $P_+=I$.
\end{proof}

\begin{theorem}[Exact mass-independent Krylov characterization]\label{thm:main}
For every $\alpha>1$,
\begin{equation}\label{eq:main}
 \cB_\ell
 =\bigl(\cK_{\ell-1}(A,B)\cap A^{-1}(\Ran B)\bigr)
  +\cK_{N-\ell}(A,B).
\end{equation}
Thus $\cB_\ell$ is a linear subspace.  It depends on $(A,B)$ and on the position $\ell$ of the disappearing event, but not on the positive values of the remaining masses.  It is the same subspace for every exponent $\alpha>1$.
\end{theorem}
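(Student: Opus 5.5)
The plan is to start from the propagated form in \cref{thm:B-propagated}, $\cB_\ell=P_+(\Rpre\cap S)+\Rpost$, and show that the mass-dependent propagator $P_+$ can be dropped modulo $\Rpost$. Both inclusions follow from \cref{lem:prop-simplify}, because $\Rpost$ is a linear subspace.

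For ``$\subseteq$'', take $d=P_+y+r_+$ with $y\in\Rpre\cap S$ and $r_+\in\Rpost$. Write
\[
 d=y+\bigl(P_+y-y\bigr)+r_+ .
\]
By \cref{lem:prop-simplify}, $P_+y-y\in\cK_{N-\ell}(A,B)=\Rpost$. Hence $d\in(\Rpre\cap S)+\Rpost$.

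For ``$\supseteq$'', take $d=y+r$ with $y\in\Rpre\cap S$ and $r\in\Rpost$. Rewrite it as
\[
 d=P_+y+\bigl(r-(P_+y-y)\bigr),
\]
and the bracket lies in $\Rpost$ by the same lemma. So $d\in P_+(\Rpre\cap S)+\Rpost$.

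Then I substitute $\Rpre=\cK_{\ell-1}(A,B)$, $S=A^{-1}(\Ran B)$ and $\Rpost=\cK_{N-\ell}(A,B)$, which gives \eqref{eq:main}. I would note the edge cases explicitly. If $\ell=1$, then $\Rpre=\{0\}$. If $\ell=N$, then $P_+=I$ and $\Rpost=\{0\}$. The lemma covers $q=0$, so neither case needs separate work.

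The remaining assertions are read off from the formula. $\cB_\ell$ is a sum of an intersection of subspaces with a subspace, so it is a linear subspace. The right-hand side of \eqref{eq:main} involves only $A$, $B$, $\ell$ and $N$. It therefore does not depend on the remaining positive masses, which enter only through $P_+$, now eliminated. Nor does it depend on $\alpha>1$, since \cref{thm:B-propagated} holds for every such $\alpha$ with an $\alpha$-independent right-hand side.

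The only delicate point is making sure the propagated characterization really is available for arbitrary fixed masses and arbitrary $\alpha>1$. That is exactly what \cref{thm:B-propagated} provides, so I expect no real obstacle. The substantive content sits in the lemma's observation that $A^r y=A^{r-1}Bv$ for $y\in S$.
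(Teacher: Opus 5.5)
Your proposal is correct and follows the paper's own proof: start from the propagated form in \cref{thm:B-propagated}, use \cref{lem:prop-simplify} to absorb $P_+y-y$ into $\Rpost$, then substitute the Krylov expressions and read off the independence claims. The only difference is that you write out both inclusions and the edge cases $\ell=1$, $\ell=N$ explicitly, where the paper handles them in a single line.
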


Formula \eqref{eq:main} is the main structural result.  Its first term is the set of pre-event states that can actually be built and that remain compatible with the vanishing event.  Its second term contains the corrections still available afterwards.  The event position $\ell$ therefore matters, whereas the positive values of the remaining event weights disappear from the limiting subspace.  They can still affect the numerical value of the minimum cost.

\begin{proof}
By \cref{thm:B-propagated},
\[
 \cB_\ell=P_+(\Rpre\cap S)+\Rpost.
\]
For $y\in\Rpre\cap S$, \cref{lem:prop-simplify} gives $P_+y=y+r$ for some $r\in\Rpost$.  Hence
\[
 P_+(\Rpre\cap S)+\Rpost
 =(\Rpre\cap S)+\Rpost.
\]
Substitute \eqref{eq:Rpre}, \eqref{eq:Rpost} and \eqref{eq:Sdef}.  The right-hand side of \eqref{eq:main} is a sum of linear subspaces, hence itself a linear subspace.  The displayed formula contains neither the remaining mass values nor $\alpha$; only the assumption $\alpha>1$ is used in the necessity argument through $\eps^{\alpha-1}\to0$.
\end{proof}

\begin{remark}[Why $\alpha=1$ is different]\label{rem:alpha1-C2}
At $\alpha=1$, identity \eqref{eq:hidden-Q} becomes $QA x_{\ell-1}=Qw_\ell$ and no small factor forces $QA x_{\ell-1}$ to vanish.  The compatibility mechanism of \cref{thm:main} is therefore genuinely supercritical.  This is consistent with \cref{prop:alpha1}: $g$-Lipschitz regularity bounds the derivative but does not suppress it.
\end{remark}

\subsection{Comparison with the deleted-event reachable space}

The spaces $\Rdel$ and $\mathcal B_\ell$ answer different questions.  The first asks what remains reachable after event $\ell$ has been removed.  The second asks which of those targets can be approached through the original systems without the regularity cost blowing up.  A bounded-cost family has a deleted-event limit, so $\mathcal B_\ell\subseteq\Rdel$.  The inclusion may be strict because ordinary deleted-event reachability places no restriction on the intermediate state at the missing event.

\begin{proposition}[Ordinary versus uniformly bounded regularity-conditioned reachability]\label{prop:B-v-Rdel}
One always has
\begin{equation}\label{eq:B-sub-Rdel}
 \cB_\ell\subseteq\Rdel=\cK_{N-1}(A,B).
\end{equation}
If
\begin{equation}\label{eq:auto-compat}
 A\cK_{\ell-1}(A,B)\subseteq\Ran B,
\end{equation}
then the hidden compatibility restriction is automatic and
\begin{equation}\label{eq:B-eq-Rdel}
 \cB_\ell=\Rdel.
\end{equation}
In particular, equality holds if $B$ is surjective.  It also holds for $A=0$.
\end{proposition}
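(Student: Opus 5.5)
The plan is to read everything off \cref{thm:main}, which gives $\cB_\ell=(\cK_{\ell-1}(A,B)\cap S)+\cK_{N-\ell}(A,B)$ with $S=A^{-1}(\Ran B)$. We then compare this with $\Rdel=\cK_{N-1}(A,B)$ by elementary Krylov inclusions.

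For the inclusion \eqref{eq:B-sub-Rdel}, the Krylov spaces are nested in $r$. Since $\ell-1\le N-1$ and $N-\ell\le N-1$, we have $\cK_{\ell-1}(A,B)\subseteq\cK_{N-1}(A,B)$ and $\cK_{N-\ell}(A,B)\subseteq\cK_{N-1}(A,B)$. The right-hand side of \eqref{eq:main} is therefore contained in $\cK_{N-1}(A,B)$. An alternative, closer to the heuristic in the text, uses the propagated form \eqref{eq:B-propagated} together with \eqref{eq:Rdeleted}: $P_+(\Rpre\cap S)+\Rpost\subseteq P_+\Rpre+\Rpost=\Rdel$. I would present the second argument, since it makes the ``bounded-cost families have a deleted-event limit'' reading explicit. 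The first serves as a check that no mass dependence enters.

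For equality under \eqref{eq:auto-compat}, the hypothesis says exactly that $\cK_{\ell-1}(A,B)\subseteq S$. Hence $\Rpre\cap S=\Rpre$, and \eqref{eq:B-propagated} becomes $\cB_\ell=P_+\Rpre+\Rpost=\Rdel$ by \eqref{eq:Rdeleted}. The one point to watch is that \eqref{eq:Rdeleted} was asserted in the text rather than proved. If needed, I would justify it directly. The deleted system has $N-1$ events with positive masses, so by \cref{thm:sourceA-atomic} its reachable space is $\cK_{N-1}(A,B)$. Splitting its trajectory at the former position of event $\ell$ gives the decomposition $P_+\Rpre+\Rpost$, with the pre-part reaching $\Rpre$ by the same theorem applied to the first $\ell-1$ events. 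The post-part reaching $\Rpost$ follows likewise from the recurrence \eqref{eq:atomic-rec-background}. I expect this bookkeeping to be the only mildly delicate step. It can be avoided entirely by using the Krylov form: under the hypothesis, $\cB_\ell=\cK_{\ell-1}+\cK_{N-\ell}$, and we must show this equals $\cK_{N-1}$. That requires $A^{k}B$ with $N-\ell\le k\le N-2$ to lie in the sum. Writing $k=(k-N+\ell)+(N-\ell)$, we have $A^{k-N+\ell}B\in\cK_{\ell-1}\subseteq S$. Iterating, $A\cK_{\ell-1}\subseteq\Ran B$ gives $A^{s}\cK_{\ell-1}\subseteq\cK_{s}$, which brings $A^kB$ into $\cK_{N-\ell}$. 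This is a cleaner route, and I would use it if the $P_+$ route looks circular.

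For the special cases, surjective $B$ means $\Ran B=\R^n$. Then $S=\R^n$ and \eqref{eq:auto-compat} holds trivially. For $A=0$ we have $A\cK_{\ell-1}=\{0\}\subseteq\Ran B$, so again \eqref{eq:auto-compat} holds. In both cases equality follows from the previous step.
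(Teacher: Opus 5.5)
Your proposal is correct, and its primary route matches the paper's proof: you read the inclusion off the propagated form \eqref{eq:B-propagated} via $P_+(\Rpre\cap S)+\Rpost\subseteq P_+\Rpre+\Rpost=\Rdel$, you get equality from $\Rpre\subseteq S$ under \eqref{eq:auto-compat}, and you handle the two special cases the same way. One small slip is in your optional Krylov alternative: the claim $A^{s}\cK_{\ell-1}\subseteq\cK_{s}$ fails for $s=0$, which is the case $\ell=N$. It costs nothing, because there the vectors $A^kB$ with $k\le N-2$ already lie in the first summand $\cK_{\ell-1}$.
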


\begin{proof}
The inclusion follows immediately from the propagated form \eqref{eq:B-propagated}, because $P_+\Rpre+\Rpost=\Rdel$.  If \eqref{eq:auto-compat} holds, then $\Rpre\subseteq S$, so
\[
 \cB_\ell=P_+\Rpre+\Rpost=\Rdel.
\]
If $B$ is surjective, $\Ran B=\R^n$ and therefore $S=\R^n$.  If $A=0$, again $S=\R^n$.
\end{proof}

\begin{corollary}[First, last and sufficiently early disappearing events]\label{cor:event-position}
The following identities hold:
\begin{align}
 \cB_1&=\cK_{N-1}(A,B)=\Rdel,\label{eq:Bfirst}\\
 \cB_N&=\cK_{N-1}(A,B)\cap A^{-1}(\Ran B).\label{eq:Blast}
\end{align}
More generally, if $N-\ell\ge\ell-1$, then
\begin{equation}\label{eq:early}
 \cB_\ell=\cK_{N-\ell}(A,B).
\end{equation}
\end{corollary}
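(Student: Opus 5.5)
All three identities will follow by specializing \cref{thm:main} to particular values of $\ell$. The only extra ingredient is the monotonicity of the truncated Krylov spaces, $\cK_r(A,B)\subseteq\cK_s(A,B)$ for $r\le s$, which is immediate from the definition \eqref{eq:Kdef}. No new analysis is needed; each case is a short piece of subspace algebra.

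For $\ell=1$, the pre-event space is $\cK_0(A,B)=\{0\}$, so the first term in \eqref{eq:main} is $\{0\}\cap A^{-1}(\Ran B)=\{0\}$. Hence $\cB_1=\cK_{N-1}(A,B)$, and this equals $\Rdel$ by \eqref{eq:Rdeleted}. Alternatively, hypothesis \eqref{eq:auto-compat} holds trivially when $\ell=1$, so \cref{prop:B-v-Rdel} gives the same conclusion.

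For $\ell=N$, the post-event space is $\cK_0(A,B)=\{0\}$. Substituting into \eqref{eq:main} leaves only $\cK_{N-1}(A,B)\cap A^{-1}(\Ran B)$, which is \eqref{eq:Blast}.

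For the general claim, assume $N-\ell\ge\ell-1$. By monotonicity, $\cK_{\ell-1}(A,B)\subseteq\cK_{N-\ell}(A,B)$. The first summand in \eqref{eq:main} is contained in $\cK_{\ell-1}(A,B)$, so it is absorbed by the second summand, and $\cB_\ell=\cK_{N-\ell}(A,B)$. This case includes $\ell=1$ whenever $N\ge1$, which gives a consistency check with \eqref{eq:Bfirst}. None of the steps is a genuine obstacle. The only point needing care is the convention $\cK_0=\{0\}$ at the endpoints $\ell=1$ and $\ell=N$, which is fixed in the paper immediately after \eqref{eq:Kdef}.
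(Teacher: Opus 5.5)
Your proposal is correct and is essentially the paper's own proof. Both specialize \cref{thm:main}, use the convention $\cK_0=\{0\}$ at $\ell=1$ and $\ell=N$, and invoke the monotonicity $\cK_{\ell-1}\subseteq\cK_{N-\ell}$ to absorb the first summand when $N-\ell\ge\ell-1$. Your alternative route to \eqref{eq:Bfirst} through \cref{prop:B-v-Rdel} is also valid, since $A\cK_0=\{0\}\subseteq\Ran B$.
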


\begin{proof}
For $\ell=1$, \eqref{eq:main} gives $\cB_1=(\cK_0\cap S)+\cK_{N-1}=\cK_{N-1}$.  For $\ell=N$, the post-event space is $\cK_0=\{0\}$.  Finally, if $N-\ell\ge\ell-1$, then $\cK_{\ell-1}\subseteq\cK_{N-\ell}$, so the first summand in \eqref{eq:main} is already contained in the second.
\end{proof}

\section{Limit cost and two degeneration mechanisms}\label{sec:costlimit}

The subspace theorem tells us when the cost stays bounded, but not what the bounded cost converges to or why it diverges outside that subspace.  There are two geometrically different failures.  A target may disappear from the ordinary reachable set when the event is removed.  Or the target may remain ordinarily reachable, while every deleted-event realization passes through an incompatible pre-event state.  These mechanisms lead to different blow-up scales and should be treated separately.

\subsection{The compatible deleted-event limit cost}

Suppose first that $d\in\cB_\ell$.  The preceding theorem suggests the correct limiting problem: delete event $\ell$, but keep the compatibility condition that survived the singular limit.  Thus, among deleted-event trajectories reaching $d$, we retain only those whose state $y$ immediately before the missing event lies in $S$.  Define
\begin{equation}\label{eq:Lambda0}
 \Lam_0^{(\ell)}(d)
 :=\min\max_{j\ne\ell}
 \frac{\norm{x_j-x_{j-1}}}{m_j^\alpha},
\end{equation}
where the minimum is taken over those compatible deleted-event trajectories.  The feasible set is nonempty exactly for $d\in\cB_\ell$, and finite-dimensional compactness after restriction to a sufficiently large norm ball gives existence of a minimizer.

\begin{theorem}[Convergence to the compatible deleted-event cost]\label{thm:cost-convergence}
For every $d\in\cB_\ell$,
\begin{equation}\label{eq:cost-convergence}
 \Lam_\eps(d)\longrightarrow\Lam_0^{(\ell)}(d)
 \qquad(\eps\downarrow0).
\end{equation}
Moreover,
\begin{equation}\label{eq:cost-upper-all}
 \Lam_\eps(d)\le\Lam_0^{(\ell)}(d)
 \qquad\text{for every }\eps>0.
\end{equation}
\end{theorem}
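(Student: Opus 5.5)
The plan is to prove the upper bound \eqref{eq:cost-upper-all} by a direct construction, and then a matching $\liminf$ lower bound by the compactness argument already used in the necessity part of \cref{thm:B-propagated}.

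\emph{Upper bound.} Fix $d\in\cB_\ell$ and let $(y_j)_{j\ne\ell}$ be a minimizer of \eqref{eq:Lambda0}. It is a compatible deleted-event trajectory, so the state $y=x_{\ell-1}$ before the missing event lies in $S$. Embed it into the $N$-event system for any $\eps>0$. Keep the controls at events $j\ne\ell$. At event $\ell$ choose $u_\ell^*$ with $Bu_\ell^*=-Ay$, as in \eqref{eq:cancel-control}. Then $x_\ell=x_{\ell-1}$ and $w_\ell=0$. The post-event recurrence starts from the same state as in the deleted system, so every subsequent state, and hence every jump $x_j-x_{j-1}$ with $j\ne\ell$, coincides with the deleted-event one. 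Since the masses $m_j$, $j\ne\ell$, are unchanged, \eqref{eq:atomic-holder-exact} gives $[x]_{\alpha,g}=\max(0,\Lam_0^{(\ell)}(d))=\Lam_0^{(\ell)}(d)$. Hence $\Lam_\eps(d)\le\Lam_0^{(\ell)}(d)$ for every $\eps>0$.

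\emph{Lower bound.} It suffices to show that $\liminf\Lam_{\eps_r}(d)\ge\Lam_0^{(\ell)}(d)$ along any sequence $\eps_r\downarrow0$.
\begin{enumerate}
\item Pass to a subsequence realizing the $\liminf$ and take minimizers $w^{(r)}$ from \cref{prop:quotient}. By the upper bound these satisfy $\|w^{(r)}\|_{\infty,2}\le\Lam_0^{(\ell)}(d)$.
\item By boundedness, extract a further subsequence along which $w_j^{(r)}\to\bar w_j$ for all $j$, including $j=\ell$.
\item As in the proof of \cref{thm:B-propagated}, $x_{\ell-1}^{(r)}\to y\in\Rpre\cap S$. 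The jump at event $\ell$ is $\eps_r^\alpha w_\ell^{(r)}\to0$, so $x_\ell^{(r)}\to y$.
\item For $j>\ell$ the states $x_j^{(r)}=x_{j-1}^{(r)}+m_j^\alpha w_j^{(r)}$ converge. Passing to the limit in the constraint \eqref{eq:w-dynamics}, i.e.\ $m_j^{\alpha-1}w_j^{(r)}-Ax_{j-1}^{(r)}\in\Ran B$, uses only that $\Ran B$ is closed. The same holds for $j<\ell$.
\item The limits $(\bar w_j)_{j\ne\ell}$ therefore define a deleted-event trajectory from $0$. Its pre-event state is $y\in S$ and its endpoint is $\lim\sum_j m_j^\alpha w_j^{(r)}=d$, the $\ell$th term vanishing.
\end{enumerate}
This limit trajectory is feasible for \eqref{eq:Lambda0}, so
\[
\Lam_0^{(\ell)}(d)\le\max_{j\ne\ell}\|\bar w_j\|\le\liminf_r\|w^{(r)}\|_{\infty,2}=\liminf_r\Lam_{\eps_r}(d).
\]
Combined with the upper bound, this gives \eqref{eq:cost-convergence}.

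\emph{Main obstacle.} The step needing the most care is identifying the limit as a genuinely feasible compatible deleted-event trajectory. Two points must be checked. First, in the deleted system the transition from $x_{\ell-1}$ to $x_{\ell+1}$ must pass through event $\ell+1$ with pre-state $y$, since the limiting jump at $\ell$ is zero. Second, the constraint at event $\ell$ degenerates and contributes only compatibility, and the compatibility limit $QAy=0$ already comes from \eqref{eq:hidden-Q}. Everything else follows from finite-dimensional closedness of the subspaces involved.
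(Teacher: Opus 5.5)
Your proposal is correct and follows essentially the same route as the paper. You insert the vanishing event with the drift-cancelling control $Bu_\ell=-Ay$ to get $\Lam_\eps(d)\le\Lam_0^{(\ell)}(d)$, then use the resulting uniform bound, compactness, and the compatibility limit $QAy=0$ from the necessity part of \cref{thm:B-propagated} to produce a feasible compatible deleted-event limit whose cost is at most $\liminf_{\eps\downarrow0}\Lam_\eps(d)$. Your explicit check that the limiting event constraints survive, by closedness of $\Ran B$, spells out a step the paper leaves implicit.
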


\begin{proof}
Take a minimizing compatible deleted-event trajectory for $\Lam_0^{(\ell)}(d)$.  At the missing event, its pre-event state $y$ satisfies $Ay\in\Ran B$.  Insert the event with mass $\eps$ and choose the cancellation control $Bu_\ell=-Ay$.  The inserted event then has zero state jump, and every other state and normalized jump is unchanged.  This produces a full $N$-event trajectory of exactly the same cost, proving \eqref{eq:cost-upper-all} and the limsup inequality
\[
 \limsup_{\eps\downarrow0}\Lam_\eps(d)
 \le\Lam_0^{(\ell)}(d).
\]

For the reverse inequality, choose a sequence $\eps_r\downarrow0$ realizing the liminf and minimizers for $\Lam_{\eps_r}(d)$.  The upper bound just proved makes their costs uniformly bounded.  Repeat the compactness argument in the proof of \cref{thm:B-propagated}.  After passing to a subsequence, the pre-event states converge to a compatible state $y\in\Rpre\cap S$, the disappearing state jump tends to zero, and the fixed-mass normalized jumps admit convergent subsequences.  The resulting deleted-event trajectory reaches $d$ and is compatible.  The maximum of finitely many Euclidean norms is lower semicontinuous, so its cost is no larger than
\[
 \liminf_{r\to\infty}\Lam_{\eps_r}(d).
\]
By minimality of $\Lam_0^{(\ell)}(d)$,
\[
 \Lam_0^{(\ell)}(d)
 \le\liminf_{\eps\downarrow0}\Lam_\eps(d).
\]
Together with the limsup inequality this proves \eqref{eq:cost-convergence}.
\end{proof}

\subsection{Range loss: the target itself disappears from the reachable set}

Range loss is the simpler mechanism.  If the target is not reachable after deletion, the nonvanishing events leave a genuine terminal component unresolved.  In the $\eps$-system that missing component can only be supplied through the vulnerable state jump.  Because this jump is $\eps^\alpha w_\ell$, an $O(1)$ missing component forces a normalized burden on the scale $\eps^{-\alpha}$.

\begin{proposition}[Range-loss lower bound]\label{prop:range-loss}
Let $d\notin\Rdel$.  If $P_+\ne0$, then
\begin{equation}\label{eq:range-lower}
 \Lam_\eps(d)
 \ge
 \frac{\dist(d,\Rdel)}{\norm{P_+}\eps^\alpha}.
\end{equation}
If $P_+=0$, then every terminal state reachable by the full system belongs to $\Rdel$, so $\Lam_\eps(d)=+\infty$ for $d\notin\Rdel$.
\end{proposition}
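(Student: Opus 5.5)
Take any feasible normalized-jump sequence $w\in\mathcal V_{\alpha,\mathbf m(\eps)}(A,B)$ with $T_{\alpha,\mathbf m(\eps)}w=d$, and decompose the terminal state exactly as in the necessity part of the proof of \cref{thm:B-propagated}:
\[
 d=P_+x_\ell+r_+,\qquad r_+\in\Rpost,
\]
where $r_+$ collects the post-event control contributions. Next write $x_\ell=x_{\ell-1}+\eps^\alpha w_\ell$ with $x_{\ell-1}\in\Rpre$. This gives
\[
 d=\bigl(P_+x_{\ell-1}+r_+\bigr)+\eps^\alpha P_+w_\ell .
\]
By \eqref{eq:Rdeleted}, the bracketed term lies in $P_+\Rpre+\Rpost=\Rdel$. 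The whole task is therefore to compare $d$ with this element of the deleted-event space.

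The key step is the distance estimate. Because $P_+x_{\ell-1}+r_+\in\Rdel$,
\[
 \dist(d,\Rdel)\le\norm{d-(P_+x_{\ell-1}+r_+)}=\eps^\alpha\norm{P_+w_\ell}\le\eps^\alpha\norm{P_+}\,\norm{w_\ell}\le\eps^\alpha\norm{P_+}\,\norm{w}_{\infty,2}.
\]
If $P_+\ne0$, divide by $\eps^\alpha\norm{P_+}$ and take the infimum over feasible $w$. \Cref{prop:quotient} identifies that infimum with $\Lam_\eps(d)$, which yields \eqref{eq:range-lower}. If $d$ is not reachable at all, then $\Lam_\eps(d)=+\infty$ and the bound is trivial. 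Because $d\notin\Rdel$ and $\Rdel$ is closed, the numerator is strictly positive, so the bound is genuinely of order $\eps^{-\alpha}$.

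For the case $P_+=0$, the same decomposition shows that every reachable terminal state has the form $d=r_+\in\Rpost\subseteq\Rdel$. Indeed $P_+x_\ell=0$ whatever the state $x_\ell$ is, in particular whatever $w_\ell$ is. A target $d\notin\Rdel$ is then unreachable, and $\Lam_\eps(d)=+\infty$ by the convention in \eqref{eq:Lambda-def}.

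The only point needing care, which I expect to be the main (mild) obstacle, is to justify the splitting $d=P_+x_\ell+r_+$ with $r_+\in\Rpost$ for arbitrary controls. I would unroll the recurrence \eqref{eq:atomic-rec-background} for $j=\ell+1,\ldots,N$. The control part is $\sum_{j>\ell}\bigl(\prod_{i=N}^{j+1}(I+m_iA)\bigr)m_jBu_j$. Each product is a polynomial in $A$ of degree at most $N-j\le N-\ell-1$, so every summand lies in $\cK_{N-\ell}(A,B)$.
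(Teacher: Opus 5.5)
Your proposal is correct and is essentially the paper's proof. Your element $P_+x_{\ell-1}+r_+$ is exactly the paper's comparison state $d_0$, obtained by deleting the $\ell$th state jump while keeping the pre- and post-event controls; the identity $d-d_0=\eps^\alpha P_+w_\ell$, the distance estimate and the $P_+=0$ case all match, and your unrolling of the recurrence (to justify $r_+\in\Rpost$) and your remark on targets outside $\cK_N(A,B)$ only spell out details the paper leaves implicit.
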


\begin{proof}
Take any full trajectory reaching $d$ and let $w_\ell$ be its normalized jump at the disappearing event.  Construct a comparison terminal state $d_0$ by keeping the same pre-event history and the same post-event controls but deleting the $\ell$th state jump.  The comparison system has only $N-1$ effective events, so $d_0\in\Rdel$.  By linearity, the effect of the deleted state jump on the terminal state is its homogeneous propagation:
\begin{equation}\label{eq:difference-range}
 d-d_0=\eps^\alpha P_+w_\ell.
\end{equation}
Thus, when $P_+\ne0$,
\[
 \dist(d,\Rdel)
 \le\norm{d-d_0}
 \le\eps^\alpha\norm{P_+}\norm{w_\ell}
 \le\eps^\alpha\norm{P_+}[x]_{\alpha,g}.
\]
Taking the infimum over full trajectories gives \eqref{eq:range-lower}.  If $P_+=0$, \eqref{eq:difference-range} gives $d=d_0$ for every full trajectory, so no target outside $\Rdel$ is reachable.
\end{proof}

\begin{example}[Sharp range-loss scale]\label{ex:scalar-range}
Let $n=m=N=1$, $A=0$, $B=1$, and let the unique mass be $\eps$.  For target $d\ne0$,
\[
 d=\eps^\alpha w_1,
 \qquad
 \Lam_\eps(d)=\frac{|d|}{\eps^\alpha}.
\]
The deleted-event reachable space is $\{0\}$, so the exponent $\eps^{-\alpha}$ in \cref{prop:range-loss} is sharp in general.
\end{example}

\subsection{Compatibility loss: ordinary reachability survives}

Compatibility loss is subtler.  Now $d\in\Rdel$, so the target survives deletion.  The cost may still diverge if every deleted-event trajectory reaching $d$ passes through a pre-event state whose drift has a component outside $\Ran B$.  The next quantity measures the smallest possible size of that incompatible drift component.  It is therefore a quantitative compatibility defect, not just an auxiliary minimization.

Define
\begin{equation}\label{eq:rho-def}
 \rho_\ell(d)
 :=\min\left\{\norm{QAy}:
 y\in\Rpre,\ r\in\Rpost,\ P_+y+r=d\right\}.
\end{equation}
The minimum is attained: the set of possible values $QA y$ is a closed affine subspace of the finite-dimensional space $QA\Rpre$, so its distance from the origin is realized.

\begin{lemma}[Compatibility seminorm]\label{lem:rho}
The map $\rho_\ell$ is a continuous seminorm on $\Rdel$ and
\begin{equation}\label{eq:rho-kernel}
 \Ker\rho_\ell=\cB_\ell.
\end{equation}
Consequently there exists $c_\ell>0$ such that
\begin{equation}\label{eq:rho-cont}
 \rho_\ell(z)\le c_\ell\norm z
 \qquad(z\in\Rdel).
\end{equation}
\end{lemma}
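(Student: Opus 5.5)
We will realize $\rho_\ell$ as a quotient-type minimization of a linear map over an affine fibre. This gives the seminorm properties, the kernel identity and the continuity bound almost at once. Consider the linear map
\[
 L:\Rpre\times\Rpost\to\Rdel,\qquad L(y,r)=P_+y+r,
\]
which is onto $\Rdel$ by \eqref{eq:Rdeleted}, and the linear map $M(y,r)=QAy$. For $d\in\Rdel$ the fibre $L^{-1}(d)$ is a nonempty affine subspace $(y_d,r_d)+\Ker L$. Hence
\[
 \rho_\ell(d)=\min_{k\in\Ker L}\norm{M(y_d,r_d)+Mk}=\dist\bigl(M(y_d,r_d),M\Ker L\bigr).
\]
In other words, $\rho_\ell=\norm{\cdot}_{\R^n/M\Ker L}\circ\widetilde M$, where $\widetilde M:\Rdel\to\R^n/M\Ker L$ is the well-defined linear map $d\mapsto[M(y_d,r_d)]$. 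It is well defined because two choices in the fibre differ by an element of $\Ker L$.

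The seminorm properties then follow from standard facts. A quotient norm composed with a linear map is a seminorm, so homogeneity and the triangle inequality are inherited. The triangle inequality can also be checked directly: add minimizers for $d_1$ and $d_2$ to get a feasible pair for $d_1+d_2$. Continuity and \eqref{eq:rho-cont} come from finite dimensionality. Fix a linear right inverse $R$ of $L$ on $\Rdel$. Then $\rho_\ell(z)\le\norm{MRz}\le\norm{MR}\,\norm z$, so we may take $c_\ell=\max(\norm{MR},1)$. Continuity then follows from $|\rho_\ell(z)-\rho_\ell(z')|\le\rho_\ell(z-z')$.

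For the kernel identity \eqref{eq:rho-kernel}, note first that the minimum in \eqref{eq:rho-def} is attained (as stated after \eqref{eq:rho-def}). So $\rho_\ell(d)=0$ if and only if there exist $y\in\Rpre$ and $r\in\Rpost$ with $P_+y+r=d$ and $QAy=0$, i.e.\ $y\in S$. This says exactly that $d\in P_+(\Rpre\cap S)+\Rpost$, which equals $\cB_\ell$ by \cref{thm:B-propagated}. It also uses $\cB_\ell\subseteq\Rdel$ from \cref{prop:B-v-Rdel}, so that the kernel lives in the domain $\Rdel$.

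The only delicate point is attainment of the minimum, since the identification of the kernel needs a zero value to be realized by an actual pair $(y,r)$. The quotient description settles this: $M\Ker L$ is a linear subspace of finite-dimensional $\R^n$, hence closed, so the distance to it is attained. Everything else is routine linear algebra.
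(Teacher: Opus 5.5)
Your proof is correct and follows essentially the same route as the paper: the triangle inequality comes from adding feasible decompositions, and the kernel identity comes from attainment of the minimum combined with the propagated characterization in \cref{thm:B-propagated}. The difference is only in packaging: you write $\rho_\ell$ as a quotient norm composed with a linear map and obtain an explicit constant $c_\ell$ from a linear right inverse of $(y,r)\mapsto P_+y+r$, whereas the paper simply invokes the continuity of seminorms on finite-dimensional spaces.
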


\begin{proof}
Positive homogeneity follows by scaling a feasible decomposition.  For $d_1,d_2\in\Rdel$, choose minimizing or arbitrarily near-minimizing feasible pre-states $y_1,y_2$.  Then $y_1+y_2$ is feasible for $d_1+d_2$, so the triangle inequality for the Euclidean norm gives
\[
 \rho_\ell(d_1+d_2)\le\rho_\ell(d_1)+\rho_\ell(d_2).
\]
Thus $\rho_\ell$ is a seminorm.  Its kernel consists exactly of targets admitting a decomposition $d=P_+y+r$ with $y\in\Rpre$ and $QAy=0$, which is the propagated characterization \eqref{eq:B-propagated}.  Every seminorm on a finite-dimensional normed space is continuous, yielding \eqref{eq:rho-cont}.
\end{proof}

\begin{proposition}[Compatibility-loss lower bound]\label{prop:compat-loss}
For every $d\in\Rdel$,
\begin{equation}\label{eq:compat-lower}
 \Lam_\eps(d)
 \ge
 \frac{\rho_\ell(d)}
 {\eps^{\alpha-1}\bigl(1+c_\ell\norm{P_+}\eps\bigr)}.
\end{equation}
Hence, if
\[
 d\in\Rdel\setminus\cB_\ell,
\]
then
\begin{equation}\label{eq:compat-omega}
 \Lam_\eps(d)=\Omega(\eps^{1-\alpha}).
\end{equation}
\end{proposition}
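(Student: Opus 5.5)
The plan is to compare an arbitrary full $\eps$-trajectory with the deleted-event trajectory obtained from it, as in the proof of \cref{prop:range-loss}, and then use the seminorm properties from \cref{lem:rho}. Fix $d\in\Rdel$ and $\eps>0$. Since $\Rdel=\cK_{N-1}(A,B)\subseteq\cK_N(A,B)$, the target $d$ is ordinarily reachable, so $\Lam_\eps(d)$ is finite. By \cref{prop:quotient} it is attained by some normalized-jump sequence $w$; write $\lambda:=\norm{w}_{\infty,2}=\Lam_\eps(d)$. It actually suffices to prove the inequality $\rho_\ell(d)\le \eps^{\alpha-1}(1+c_\ell\norm{P_+}\eps)\,[x]_{\alpha,g}$ for every trajectory reaching $d$ and then take the infimum.

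\emph{Step 1: the comparison state.} Let $x_{\ell-1}\in\Rpre$ be the pre-event state of the chosen trajectory. Build $d_0$ by keeping the pre-event history and the post-event controls but deleting the $\ell$th state jump. Then the state immediately after the missing event is $x_{\ell-1}$, and
\[
 d_0=P_+x_{\ell-1}+r,\qquad r\in\Rpost .
\]
This is a feasible decomposition in the definition \eqref{eq:rho-def}, so $d_0\in\Rdel$ and
\[
 \rho_\ell(d_0)\le\norm{QAx_{\ell-1}}=\eps^{\alpha-1}\norm{Qw_\ell}\le\eps^{\alpha-1}\lambda,
\]
using the projected identity \eqref{eq:hidden-Q} and $\norm{Q}\le1$.

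\emph{Step 2: the jump correction.} By linearity, exactly as in \eqref{eq:difference-range}, we have $d-d_0=\eps^\alpha P_+w_\ell$. Both $d$ and $d_0$ lie in $\Rdel$, so $d-d_0\in\Rdel$. This membership is the one point to check carefully, since the continuity bound \eqref{eq:rho-cont} only holds on $\Rdel$; here it comes for free. Subadditivity of $\rho_\ell$ and \eqref{eq:rho-cont} then give
\[
 \rho_\ell(d)\le\rho_\ell(d_0)+\rho_\ell(d-d_0)
 \le\eps^{\alpha-1}\lambda+c_\ell\eps^\alpha\norm{P_+}\lambda .
\]
Dividing by $\eps^{\alpha-1}(1+c_\ell\norm{P_+}\eps)$ yields \eqref{eq:compat-lower}.

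\emph{Step 3: the growth rate.} If $d\in\Rdel\setminus\cB_\ell$, then \eqref{eq:rho-kernel} gives $\rho_\ell(d)>0$. For $\eps\le1$ the denominator in \eqref{eq:compat-lower} is at most $\eps^{\alpha-1}(1+c_\ell\norm{P_+})$, where $P_+$ does not depend on $\eps$. Hence $\Lam_\eps(d)\ge C\eps^{1-\alpha}$ with $C>0$, which is \eqref{eq:compat-omega}.

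None of the steps is a serious obstacle. The only subtle points are the ones already flagged: the comparison decomposition must be admissible in \eqref{eq:rho-def}, and the correction term must lie in $\Rdel$ so that \eqref{eq:rho-cont} can be applied.
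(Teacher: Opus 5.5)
Your proof is correct and follows the paper's argument essentially verbatim. It uses the same comparison terminal state $d_0=d-\eps^\alpha P_+w_\ell\in\Rdel$. It then bounds $\rho_\ell(d_0)$ by showing that $x_{\ell-1}$ is feasible in its definition and applying the projected identity \eqref{eq:hidden-Q}. Finally, it applies subadditivity together with the continuity bound \eqref{eq:rho-cont} to $d-d_0\in\Rdel$. Your Step~3 only makes the final implication explicit, restricting to $\eps\le1$ to get the constant $\rho_\ell(d)/(1+c_\ell\norm{P_+})>0$.
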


\begin{proof}
Take any full trajectory reaching $d$ and put $L=[x]_{\alpha,g}$.  As in \cref{prop:range-loss}, delete only the $\ell$th state jump and denote the resulting terminal state by
\[
 d_0=d-\eps^\alpha P_+w_\ell\in\Rdel.
\]
The unchanged pre-event state $x_{\ell-1}$ is feasible in the minimization defining $\rho_\ell(d_0)$, hence by \eqref{eq:hidden-Q},
\[
 \rho_\ell(d_0)
 \le\norm{QA x_{\ell-1}}
 =\eps^{\alpha-1}\norm{Qw_\ell}
 \le\eps^{\alpha-1}L.
\]
Because both $d$ and $d_0$ lie in $\Rdel$, so does $d-d_0$.  By the triangle inequality for the seminorm and \eqref{eq:rho-cont},
\begin{align*}
 \rho_\ell(d)
 &\le\rho_\ell(d_0)+\rho_\ell(d-d_0)\\
 &\le\eps^{\alpha-1}L
 +c_\ell\eps^\alpha\norm{P_+}\norm{w_\ell}\\
 &\le\eps^{\alpha-1}
 \bigl(1+c_\ell\norm{P_+}\eps\bigr)L.
\end{align*}
Taking the infimum over trajectories proves \eqref{eq:compat-lower}.  If $d\notin\cB_\ell$, \cref{lem:rho} gives $\rho_\ell(d)>0$, which implies \eqref{eq:compat-omega}.
\end{proof}

These estimates are lower bounds, not universal asymptotic equivalences.  Their role is to separate the two mechanisms and their natural scales.  The examples below show that each exponent can be attained in a natural model.

\section{Position--velocity benchmark}\label{sec:application}

We now return to the intermittent-actuation picture from the Introduction.  A NASA CubeSat docking study used a translational double-integrator model and treated minimum impulse bit as a dead-zone-like limitation; close-range rendezvous control allocation has also been formulated with an explicit minimum-impulse-bit constraint \citep{PeiEtAl2017,BotelhoRosaLemos2025}.  We use this setting only as motivation.  The model below is dimensionless and event-level, and its purpose is to make the compatibility theorem concrete in the simplest position--velocity system.

Let $\widetilde q$ and $\widetilde v$ be one relative position and velocity component.  Choose reference scales $q_*,v_*>0$ and $g_*=q_*/v_*$, and set $q=\widetilde q/q_*$ and $v=\widetilde v/v_*$.  In normalized variables take
\begin{equation}\label{eq:double-matrices}
 A=\begin{pmatrix}0&1\\0&0\end{pmatrix},\qquad B=e_2.
\end{equation}
Three correction opportunities have event weights
\begin{equation}\label{eq:double-masses}
 m_1=a>0,\qquad m_2=\varepsilon>0,\qquad m_3=b>0,
\end{equation}
and the middle event loses operational weight.  The Stieltjes event law is
\begin{equation}\label{eq:application-event-map}
 q_j=q_{j-1}+m_jv_{j-1},\qquad v_j=v_{j-1}+m_ju_j.
\end{equation}
These atomic weights are effective event weights, not calibrated pulse durations or impulses.  Equation \eqref{eq:application-event-map} is likewise an exact law of the reduced Stieltjes model, not a finite-duration constant-thrust propagator.  Although $A^2=0$ gives the same homogeneous factor $e^{mA}=I+mA$, a true finite-duration constant acceleration would also generate a position-control term of order $m^2u/2$.

\begin{figure}[htbp]
	\centering
	\begin{tikzpicture}[x=1cm,y=1cm,>=Latex,font=\small]
		\draw[->,thick] (0,0) -- (11.6,0) node[right] {operational sequence};
		
		\draw[thick] (0.35,0.45) -- (0.75,0.45) -- (0.90,0.68) -- (0.75,0.91) -- (0.35,0.91) -- cycle;
		\node[above=2pt] at (0.62,0.91) {chaser};
		\draw[thick] (10.65,0.38) rectangle (11.15,0.98);
		\node[above=2pt] at (10.90,0.98) {target};
		\node[below=11pt,align=center] at (10.90,0) {terminal relative\\state};
		
		\draw[thick] (2.7,-0.10) -- (2.7,0.10);
		\draw[->,thick] (2.7,0.12) -- (2.7,0.95);
		\node[above=1pt] at (2.7,0.97) {correction 1};
		\node[below=5pt] at (2.7,-0.10) {$m_1=a$};
		
		\draw[thick] (5.7,-0.10) -- (5.7,0.10);
		\draw[->,thick] (5.7,0.12) -- (5.7,0.95);
		\node[above=1pt,align=center] at (5.7,0.97) {vulnerable\\correction};
		\node[below=5pt] at (5.7,-0.10) {$m_2=\varepsilon\downarrow0$};
		
		\draw[thick] (8.7,-0.10) -- (8.7,0.10);
		\draw[->,thick] (8.7,0.12) -- (8.7,0.95);
		\node[above=1pt] at (8.7,0.97) {final correction};
		\node[below=5pt] at (8.7,-0.10) {$m_3=b$};
	\end{tikzpicture}
	\caption{Schematic one-dimensional proximity maneuver with three effective correction opportunities.  The middle opportunity loses Stieltjes event weight while the first and last remain fixed.  The drawing is not to scale, and the event weights are atomic weights in the associated Stieltjes measure rather than calibrated pulse durations or impulse bits.}
	\label{fig:physical-timeline}
\end{figure}

At the vulnerable middle event,
\begin{equation}\label{eq:application-middle-jump}
 w_2
 =\frac{x_2-x_1}{\varepsilon^\alpha}
 =\varepsilon^{1-\alpha}\begin{pmatrix}v_1\\u_2\end{pmatrix}.
\end{equation}
Hence bounded cost with $\alpha>1$ requires $v_1\to0$.  This is exactly the abstract condition $Ax_1\in\operatorname{Ran}B$, because $Ax=(v,0)^\top$ and $\operatorname{Ran}B=\operatorname{span}\{e_2\}$.

Deleting Event 2 leaves endpoint columns $(ab,a)^\top$ and $(0,b)^\top$, whose determinant is $ab^2$.  Therefore
\begin{equation}\label{eq:double-Rdel}
 \mathcal R_{\rm del}=\mathbb R^2.
\end{equation}
Ordinary controllability is preserved.  On the other hand,
\[
 \mathcal K_1(A,B)=\operatorname{span}\{e_2\},\qquad
 A^{-1}(\operatorname{Ran}B)=\operatorname{span}\{e_1\},
\]
so \cref{thm:main} gives
\begin{equation}\label{eq:double-B}
 \mathcal B_2=\operatorname{span}\{e_2\}
 \subsetneq\mathbb R^2=\mathcal R_{\rm del}.
\end{equation}
The distinction is easy to misread.  Position is still controllable in the ordinary sense.  What fails is uniform boundedness of the event-normalized super-$g$-H\"older cost for terminal directions outside $\operatorname{span}\{e_2\}$ as the middle event vanishes.

\begin{figure}[htbp]
	\centering
	\begin{tikzpicture}[>=Latex,font=\small,x=0.9cm,y=0.9cm]
		\begin{scope}[yshift=0cm]
			\node[font=\bfseries] at (0,2.45) {(a) Deleted-event reachability};
			\fill[black!4] (-2.35,-1.85) rectangle (2.35,1.85);
			\draw[->,thick] (-2.55,0) -- (2.65,0) node[below right] {$q$};
			\draw[->,thick] (0,-1.95) -- (0,1.85) node[above right] {$v$};
			\fill (0,0) circle (1.2pt);
			\fill (1.45,0) circle (1.6pt);
			\node[above=4pt] at (1.45,0) {$e_1$};
			\fill (0,1.30) circle (1.6pt);
			\node[right=4pt] at (0,1.30) {$e_2$};
			\node[align=center] at (0,-1.30) {$\mathcal R_{\rm del}=\mathbb R^2$};
		\end{scope}
		
		\begin{scope}[yshift=-5.05cm]
			\node[font=\bfseries] at (0,2.45) {(b) Uniformly bounded-cost limit};
			\draw[->,thick] (-2.55,0) -- (2.65,0) node[below right] {$q$};
			\draw[->,thick] (0,-1.95) -- (0,1.85) node[above right] {$v$};
			\draw[line width=2.1pt] (0,-1.72) -- (0,1.72);
			\fill (0,0) circle (1.2pt);
			\fill (0,1.30) circle (1.6pt);
			\node[right=4pt] at (0,1.30) {$e_2$};
			\draw (1.45,0) circle (2.2pt);
			\node[above=5pt] at (1.45,0) {$e_1\notin\mathcal B_2$};
			\node[align=center] at (-1.18,-1.30) {$\mathcal B_2=\operatorname{span}\{e_2\}$};
		\end{scope}
	\end{tikzpicture}
	\caption{For the three-event double integrator with the middle event removed, the remaining two events still reach the whole position--velocity plane (top).  In the vanishing-event limit under a uniform super-$g$-H\"older budget, the intermediate compatibility condition leaves only the velocity direction $\mathcal B_2=\operatorname{span}\{e_2\}$ (bottom); the pure position direction $e_1$ is ordinarily reachable but not in the bounded-cost subspace.}
	\label{fig:reachability-geometry}
\end{figure}

Consider the normalized terminal command $d=e_1=(1,0)^\top$, a position correction with zero terminal velocity correction.  Put
\begin{equation}\label{eq:D-eps}
 D_\varepsilon^2=(b+\varepsilon)^2+b^2\varepsilon^2.
\end{equation}

\begin{proposition}[Exact small-event cost in the benchmark]\label{prop:double-exact-cost}
Let $a,b>0$ and $\alpha>1$.  If
\begin{align}
 \varepsilon^{\alpha-1}(b+\varepsilon)
 &\le a^\alpha D_\varepsilon,\label{eq:double-dominance-1}\\
 \varepsilon^{\alpha-1}(b+\varepsilon+b\varepsilon^2)\sqrt{b^2+1}
 &\le b^\alpha D_\varepsilon,\label{eq:double-dominance-2}
\end{align}
then
\begin{equation}\label{eq:double-exact}
 \Lambda_{\alpha,(a,\varepsilon,b)}(e_1)
 =\frac{\varepsilon^{1-\alpha}}{D_\varepsilon}.
\end{equation}
In particular, \eqref{eq:double-exact} holds for all sufficiently small $\varepsilon>0$.
\end{proposition}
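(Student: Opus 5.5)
The plan is a direct computation: parametrize all three-event trajectories reaching $e_1$ by a few scalar controls, then bound the maximum in \eqref{eq:atomic-holder-exact} from below by the middle normalized jump alone. Finally, I will show that the minimizer of that middle jump is admissible under \eqref{eq:double-dominance-1}--\eqref{eq:double-dominance-2}.

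\emph{Step 1: parametrization.} Start from $x_0=0$ and apply \eqref{eq:application-event-map}. Event 1 gives $q_1=0$ and $v_1=au_1$, so $w_1=a^{-\alpha}(0,v_1)^\top$, and $v_1\in\R$ is free. At event 2, $q_2=\varepsilon v_1$ and $v_2=v_1+\varepsilon u_2$; the normalized jump $w_2=\varepsilon^{1-\alpha}(v_1,u_2)^\top$ is given by \eqref{eq:application-middle-jump}. At event 3, $q_3=q_2+bv_2$ and $v_3=v_2+bu_3$. The target condition $v_3=0$ forces $u_3=-v_2/b$. Then $w_3=b^{1-\alpha}(v_2,u_3)^\top$ has norm $b^{-\alpha}\sqrt{b^2+1}\,|v_2|$. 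The condition $q_3=1$ becomes the single linear constraint
\[
 (b+\varepsilon)v_1+b\varepsilon\,u_2=1 .
\]
So the feasible set of \cref{prop:quotient} is the line $\ell_\varepsilon$ in the $(v_1,u_2)$-plane, with $u_1$ and $u_3$ determined.

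\emph{Step 2: lower bound.} For every feasible trajectory, $\max_j\|w_j\|\ge\|w_2\|=\varepsilon^{1-\alpha}\|(v_1,u_2)\|$. The distance from the origin to $\ell_\varepsilon$ is $1/D_\varepsilon$, with $D_\varepsilon$ as in \eqref{eq:D-eps}. Hence $\Lambda_{\alpha,(a,\varepsilon,b)}(e_1)\ge\varepsilon^{1-\alpha}/D_\varepsilon$ with no hypothesis. This bound is attained only at the foot of the perpendicular,
\[
 (v_1^*,u_2^*)=D_\varepsilon^{-2}\,(b+\varepsilon,\;b\varepsilon).
\]

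\emph{Step 3: upper bound.} I will evaluate the other two jumps at $(v_1^*,u_2^*)$. First, $\|w_1\|=a^{-\alpha}(b+\varepsilon)/D_\varepsilon^2$, and this is $\le\varepsilon^{1-\alpha}/D_\varepsilon$ exactly when \eqref{eq:double-dominance-1} holds. Next, $v_2^*=v_1^*+\varepsilon u_2^*=(b+\varepsilon+b\varepsilon^2)/D_\varepsilon^2$. Thus $\|w_3\|=b^{-\alpha}\sqrt{b^2+1}\,(b+\varepsilon+b\varepsilon^2)/D_\varepsilon^2$, and this is $\le\varepsilon^{1-\alpha}/D_\varepsilon$ exactly when \eqref{eq:double-dominance-2} holds. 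Under both hypotheses the middle jump is therefore the maximum, and the cost equals the lower bound, which gives \eqref{eq:double-exact}.

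For the final claim, let $\varepsilon\downarrow0$. The left-hand sides of both inequalities tend to $0$ because $\alpha>1$. The right-hand sides tend to $a^\alpha b>0$ and $b^{\alpha+1}>0$, since $D_\varepsilon\to b$. Both inequalities therefore hold for all sufficiently small $\varepsilon$. The only delicate point is the bookkeeping in Step 3, namely confirming that the hypotheses are exactly the two dominance comparisons after clearing the factor $D_\varepsilon^2$. The rest is elementary.
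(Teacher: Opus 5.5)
Your proposal is correct and follows essentially the same route as the paper's proof. Both parametrize the feasible trajectories by $(v_1,u_2)$ on the line $(b+\varepsilon)v_1+b\varepsilon u_2=1$, bound the cost below by $\|w_2\|$ via Cauchy--Schwarz (your distance-to-line argument), and check at the minimizer $D_\varepsilon^{-2}(b+\varepsilon,\,b\varepsilon)$ that the two hypotheses are exactly $\|w_1\|\le\|w_2\|$ and $\|w_3\|\le\|w_2\|$.
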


\begin{proof}
For any feasible three-event trajectory reaching $e_1$, let $y=v_1$ be the velocity immediately before the vulnerable event and let $t=u_2$.  The first component of the event-2 constraint gives
\[
 w_2=\varepsilon^{1-\alpha}(y,t)^\top.
\]
Since the position increment across Event~2 is $\varepsilon y$ and the velocity entering Event~3 is $y+\varepsilon t$, the terminal position condition is
\begin{equation}\label{eq:double-linear-constraint}
 (b+\varepsilon)y+b\varepsilon t=1.
\end{equation}
Cauchy--Schwarz applied to \eqref{eq:double-linear-constraint} yields
\[
 \|w_2\|
 =\varepsilon^{1-\alpha}\sqrt{y^2+t^2}
 \ge \frac{\varepsilon^{1-\alpha}}{D_\varepsilon}.
\]
Hence the right-hand side of \eqref{eq:double-exact} is a lower bound for the global maximum cost.

Equality in the preceding Cauchy--Schwarz bound is attained by
\begin{equation}\label{eq:double-opt-yt}
 y=\frac{b+\varepsilon}{D_\varepsilon^2},
 \qquad
 t=\frac{b\varepsilon}{D_\varepsilon^2}.
\end{equation}
Choose
\begin{equation}\label{eq:opt-w12}
 w_1=\begin{pmatrix}0\\ \dfrac{b+\varepsilon}{a^\alpha D_\varepsilon^2}\end{pmatrix},
 \qquad
 w_2=\frac{\varepsilon^{1-\alpha}}{D_\varepsilon^2}
 \begin{pmatrix}b+\varepsilon\\ b\varepsilon\end{pmatrix}.
\end{equation}
Then $v_2=y+\varepsilon t=(b+\varepsilon+b\varepsilon^2)/D_\varepsilon^2$, and Event~3 is completed by
\begin{equation}\label{eq:opt-w3}
 w_3=\frac{v_2}{b^\alpha}\begin{pmatrix}b\\-1\end{pmatrix}.
\end{equation}
The first component of \eqref{eq:opt-w3} is exactly the drift constraint at Event~3, while the second enforces zero terminal velocity.  The corresponding block norms are
\[
 \|w_1\|=\frac{b+\varepsilon}{a^\alpha D_\varepsilon^2},\qquad
 \|w_2\|=\frac{\varepsilon^{1-\alpha}}{D_\varepsilon},\qquad
 \|w_3\|=\frac{(b+\varepsilon+b\varepsilon^2)\sqrt{b^2+1}}
 {b^\alpha D_\varepsilon^2}.
\]
Conditions \eqref{eq:double-dominance-1}--\eqref{eq:double-dominance-2} are precisely the inequalities $\|w_1\|\le\|w_2\|$ and $\|w_3\|\le\|w_2\|$.  Under them the constructed trajectory attains the lower bound, proving \eqref{eq:double-exact}.  Since $\alpha>1$, the left-hand sides of \eqref{eq:double-dominance-1}--\eqref{eq:double-dominance-2} tend to zero while their right-hand sides tend to positive limits as $\varepsilon\downarrow0$, so both inequalities hold for all sufficiently small $\varepsilon$.
\end{proof}

For the minimizing family above,
\begin{equation}\label{eq:actual-jump-norm}
 |u_2|=\frac{b\varepsilon}{D_\varepsilon^2}\longrightarrow0,
 \qquad
 \|\Delta x_2\|=\frac{\varepsilon}{D_\varepsilon}\longrightarrow0,
\end{equation}
whereas
\begin{equation}\label{eq:normalized-burden-diverge}
 \|w_2\|=\frac{\varepsilon^{1-\alpha}}{D_\varepsilon}\longrightarrow\infty,
 \qquad
 v_1\longrightarrow\frac1b.
\end{equation}
The singularity is therefore relative rather than absolute.  Neither the middle command nor the raw state jump explodes; in fact both quantities displayed in \eqref{eq:actual-jump-norm} tend to zero.  The problem is that the jump decays only at order $\varepsilon$, whereas supercritical regularity asks for the stronger scale $\varepsilon^\alpha$.  This is exactly the compatibility-loss mechanism.

\begin{figure}[htbp]
	\centering
	\begin{tikzpicture}
		\begin{loglogaxis}[
			width=0.86\textwidth,
			height=0.48\textwidth,
			xmin=1e-3,xmax=1e-1,
			ymin=1e-3,ymax=5e1,
			xlabel={degrading event weight $\varepsilon$},
			ylabel={magnitude},
			grid=both,
			legend pos=south west,
			legend cell align=left,
			tick label style={font=\small},
			label style={font=\small},
			legend style={font=\small,fill=white},
			]
			\addplot+[thick,no marks,domain=1e-3:1e-1,samples=180]
			{x/sqrt((1+x)^2+x^2)};
			\addlegendentry{$\|\Delta x_2\|$}
			
			\addplot+[thick,dashed,no marks,domain=1e-3:1e-1,samples=180]
			{x/((1+x)^2+x^2)};
			\addlegendentry{$|u_2|$}
			
			\addplot+[thick,dotted,no marks,domain=1e-3:1e-1,samples=180]
			{x^(-0.5)/sqrt((1+x)^2+x^2)};
			\addlegendentry{$\|w_2\|$, $\alpha=3/2$}
		\end{loglogaxis}
	\end{tikzpicture}
	\caption{What becomes singular in the exact optimal family for $a=b=1$ and $\alpha=3/2$, on the verified small-event interval $10^{-3}\le\varepsilon\le10^{-1}$.  The actual middle-event state jump $\|\Delta x_2\|$ and the command magnitude $|u_2|$ both vanish as the opportunity disappears, while the event-normalized state variation $\|w_2\|$ diverges.  The singularity is therefore relative to the shrinking event scale, not an exploding physical jump.}
	\label{fig:optimizer-structure}
\end{figure}

The cost has the asymptotic form
\begin{equation}\label{eq:double-asymptotic}
 \Lambda_{\alpha,(a,\varepsilon,b)}(e_1)
 \sim\frac{1}{b\varepsilon^{\alpha-1}}.
\end{equation}
Accordingly, a selected dimensionless regularity budget $L$ gives the small-event design scale
\begin{equation}\label{eq:budget-asymptotic}
 \varepsilon_{\rm crit}\sim(bL)^{-1/(\alpha-1)}.
\end{equation}
This scale has a limited but useful interpretation.  In the reduced model it marks when a chosen terminal command ceases to satisfy the prescribed regularity budget, even though ordinary reachability is unchanged.  It is not a fuel, saturation, or flight-safety threshold.  For the illustrative values $a=b=1$, $\alpha=3/2$, and $L=10$, the asymptotic estimate is $10^{-2}$, while the exact equality from \eqref{eq:double-exact} gives approximately $9.806\times10^{-3}$.

\begin{figure}[htbp]
	\centering
	\begin{tikzpicture}
		\begin{loglogaxis}[
			width=0.86\textwidth,
			height=0.50\textwidth,
			xmin=1e-3,xmax=1e-1,
			ymin=2e-1,ymax=2e3,
			xlabel={degrading event weight $\varepsilon$},
			ylabel={$\Lambda_{\alpha,(1,\varepsilon,1)}(e_1)$},
			grid=both,
			legend pos=north east,
			legend cell align=left,
			tick label style={font=\small},
			label style={font=\small},
			legend style={font=\small,fill=white},
			]
			\addplot+[thick,no marks,domain=1e-3:1e-1,samples=180]
			{x^(-0.25)/sqrt((1+x)^2+x^2)};
			\addlegendentry{$\alpha=1.25$}
			
			\addplot+[thick,dashed,no marks,domain=1e-3:1e-1,samples=180]
			{x^(-0.5)/sqrt((1+x)^2+x^2)};
			\addlegendentry{$\alpha=1.5$}
			
			\addplot+[thick,dotted,no marks,domain=1e-3:1e-1,samples=180]
			{x^(-1)/sqrt((1+x)^2+x^2)};
			\addlegendentry{$\alpha=2$}
			
			\addplot+[thin,dashdotted,no marks,domain=1e-3:1e-1] {10};
			\addlegendentry{illustrative budget $L=10$}
		\end{loglogaxis}
	\end{tikzpicture}
	\caption{Event-normalized regularity cost for the normalized terminal position correction $e_1$, with $a=b=1$, on $10^{-3}\le\varepsilon\le10^{-1}$.  For the three displayed exponents, the dominance conditions in Proposition~\ref{prop:double-exact-cost} hold throughout this interval, so the plotted curves use the exact small-event formula.  The horizontal line $L=10$ is an illustrative regularity budget; its intersections indicate where that selected budget is crossed, not a physical safety threshold.}
	\label{fig:cost-event-weight}
\end{figure}

The figures use only formulas proved in the small-event regime.  The cost plots are therefore restricted to $10^{-3}\le\varepsilon\le10^{-1}$, an interval on which the displayed parameter choices satisfy the dominance conditions \eqref{eq:double-dominance-1}--\eqref{eq:double-dominance-2}.  The benchmark should be read as a one-dimensional analytical model.  Orbital coupling, saturation, uncertainty, and propulsion calibration are outside its scope, and $\Lambda$ measures event-normalized trajectory regularity rather than a standard engineering effort.
\section{Discussion and conclusions}\label{sec:discussion}

The paper began with a simple discrepancy.  In a finite-atomic Stieltjes system, ordinary reachability can remain unchanged while one positive event weight becomes arbitrarily small.  Once the trajectory is measured in a supercritical super-$g$-H\"older scale, that insensitivity disappears.  The vanishing event leaves a condition on the state reached just before it: the drift of that state must lie in the available input range.  Combining this compatibility condition with the directions reachable before and after the event gives the exact bounded-cost terminal space.

This is the point at which the present result goes beyond the individual classical ingredients.  Krylov reachability, predecessor constructions, minimum-amplitude control, and singular parametric optimization are all established theories \citep{BasileMarro1969,Cadzow1974,Kolev1975,Wonham1979,AvrachenkovEtAl2012,BorrelliBemporadMorari2017,RockafellarWets1998}.  Likewise, ordinary Stieltjes controllability and supercritical atomic regularity are available from earlier work \citep{SiWangFeckan2020,FernandezReachability2026,IlterDuruKoca2026}.  Here these ingredients meet in a singular fixed-target problem: the supercritical scaling selects an intermediate drift-cancellability condition, and that condition determines the terminal directions whose minimum normalized-jump cost stays bounded.  We do not claim novelty for predecessor geometry or singular optimization in isolation.

The two divergence mechanisms clarify why ordinary endpoint reachability is too coarse for this limit.  If the target disappears from the deleted-event reachable space, the vulnerable event must supply a terminal component of order one and the natural lower-bound scale is $\varepsilon^{-\alpha}$.  If the target remains ordinarily reachable but every limiting realization violates compatibility, the obstruction is subtler and the lower-bound scale becomes $\varepsilon^{1-\alpha}$.  On the bounded-cost subspace, neither mechanism occurs: the minimum costs converge to the deleted-event problem with the compatibility condition retained.

The position--velocity benchmark makes this distinction visible without changing the mathematical setting.  Deleting the middle event leaves the reduced double integrator controllable, yet a pure terminal position direction lies outside the bounded-cost space.  The exact minimizing family also prevents an overly physical interpretation of the divergence.  The vulnerable-event command and the raw state jump both tend to zero; what diverges is the jump relative to the smaller scale required by supercritical regularity.  The benchmark is therefore an analytical illustration of the singular geometry, not a calibrated rendezvous or propulsion model.

The present theory is intentionally finite-dimensional and exact: $A$ and $B$ are constant, the clock has finitely many atoms, one event weight vanishes, and $\alpha>1$.  Several extensions are natural, including several vanishing events, bounded controls, uncertain event weights, time-dependent matrices, and multi-axis relative-motion models.  The central question in each case is the same: which intermediate compatibility conditions survive the singular limit, and how do they reshape the terminal reachable geometry?

\section*{Use of Generative-AI Tools Declaration}
During the preparation of this manuscript, the authors used AI-assisted tools to improve the clarity, structure, and presentation of the text, and to support the verification of mathematical arguments and results. All mathematical content, proofs, and conclusions were independently reviewed and validated by the authors, who take full responsibility for the accuracy, originality, and integrity of the work.

\end{document}